\newtheorem{theorem}{Theorem}[section]
\newtheorem{corollary}[theorem]{Corollary}
\newtheorem{lemma}[theorem]{Lemma}
\newtheorem{conjecture}[theorem]{Conjecture}
\newtheorem{proposition}[theorem]{Proposition}
\theoremstyle{definition}
\newtheorem{definition}[theorem]{Definition}
\theoremstyle{remark}
\newtheorem{remark}[theorem]{\sc Remark}
\newtheorem{example}[theorem]{\sc Example}
\newcommand{\proj}{{\rm{proj}}}
\newcommand{\Sing}{{\rm{Sing\hspace{2pt}}}}
\newcommand{\Disc}{{\rm{Disc\hspace{2pt}}}}
\newcommand{\re}{{\rm{Re}}}
\newcommand{\im}{{\rm{Im\hspace{2pt}}}}
\newcommand{\dd}{{\rm{d}}}
\newcommand{\e}{\varepsilon}
\newcommand{\m}{\setminus}
\newcommand{\cC}{{\mathcal C}}
\newcommand{\bR}{{\mathbb R}}
\newcommand{\bC}{{\mathbb C}}
\begin{document}

\title[Geometrical conditions for the existence of a Milnor vector field]{Geometrical conditions for the existence of a Milnor vector field}

\author{Raimundo Araújo dos Santos, Maico F. Ribeiro}

\subjclass[2010]{32S55, 58K15, 57Q45, 32C40, 32S60, 32B20, 14D06, 58K05, 57R45, 14P10, 32S20}

\keywords{singularities of real analytic maps, Milnor fibrations, mixed singularities, stratification theory, topology of subanalytic sets}

\thanks{The authors acknowledge the Professor Mihai Marius Tib\u ar for the valuable suggestions.}

\begin{abstract}
 
We introduce several sufficient conditions to guarantee the existence of the Milnor vector field for new classes of singularities of map germs. This special vector field is related with the equivalence problem of the Milnor fibrations for real and complex singularities, if they exit. 
\end{abstract}

\maketitle

\section{Introduction}

For the holomorphic functions germs  $f: (\mathbb{C}^{n+1}, 0) \to (\mathbb{C}, 0)$ with $\dim \Sing f \geq 0$,  Milnor showed that for any small enough  $ \e >0$  the restriction map
 \begin{equation}\label{csf}
 f/\| f\|: S^{2n+1}_\e \m K_\e \to S_{1}^1
 \end{equation}
 is a locally trivial smooth fibration, where $K_\e:=S^{2n+1}_\e \cap f^{-1}(0)$.

 It was proved later by Lê D. Tráng in \cite{Le} that for any small enough $\epsilon>0$, there exists $0 < \delta \ll \epsilon$, such that the restriction map $
 f_{\mid}:  {B}_{\e}^{2n+2} \cap f^{-1} \left(D_{\delta} \m \{0\} \right) \rightarrow  D_{\delta}\m \{0\} $
 is the projection of a locally trivial smooth fibration, where $ {B}_{\e}^{2n+2} $ and $ D_{\delta} $ are  the open balls in the respective spaces. It is well known that the previous fibration  induces the smooth fibration
 \begin{equation}\label{mfe3}
 f/\| f\|: B_{\e}^{2n+2} \cap f^{-1} \left(S^{1}_{\eta}\right) \rightarrow  S^{1}_{1}.
 \end{equation}

  Moreover, it follows from  the Milnor \cite{Mi} and from Lê \cite {Le} works that  the fibrations (\ref{csf}) and (\ref{mfe3}) are equivalent.
 
In the case of  real analytic map germ  $G:(\bR^{m},0) \rightarrow (\bR^{p}, 0)$, $ m>p\ge 2 $ with $ \Sing G \subset G^{-1}(0) $ as a set germ, the authors of \cite{Ma,PT} gave several conditions that ensure the existence of the \emph{empty tube fibration}
$$ G/\| G\|: B_{\e}^{m} \cap G^{-1} \left(S^{p-1}_{\eta}\right) \rightarrow  S^{p-1}_{1}.$$

Under special conditions, in the papers \cite{dST0, dST1, ACT, CSS1,PS0,PS2,PS3} it was proved the existence of the \emph{sphere fibration}
$$ G/\| G\|: S^{m-1}_\e \m K_\e \to S^{p-1}_1. $$

More recently, in \cite{ART} and \cite{ART2}, the authors extended all previous  results for the case when the singular set $\Sing G$ has positive dimension and is not necessarily included in the central fibre $G^{-1}(0)$, i.e., when $ \Disc G$ has positive dimension. However, even in the case where $ \Sing G = {0} $ it is not known whether or not these two fibrations are equivalent. 

This equivalence problem has been approached by several authors in the last years, first in the case $ \Disc G = \{0\} $, e.g., \cite{A1,dST0,dST1,Oka3,Ha,CSS1}, and more recently in \cite{ART2} in the more general case where $\Disc G $ has positive dimension. 

In the paper \cite{ART2}, this problem was  formulated in general as a conjecture, see Conjecture \ref{conjec}, Section 3. Moreover, the authors showed the relationship between the equivalence problem and the existence of the so called \emph{Milnor vector field}. Beside that, it was also shown that under a topological condition under the so called \emph{Milnor set} (see \cite[Corollary 4.11]{ART2}) this conjecture is solved. 

\vspace{0.2cm}

 The aim of this paper is to introduced several new sufficient conditions to ensure the existence of the Milnor vector field and hence to ensure the equivalence between the tube and sphere fibrations. Moreover, we present plenty of new classes of real and complex singularities satisfying our conditions.

The paper is organized as follows. In section 2, we remind the main results about the existence of the local tube and sphere fibration structures in the general setting as proved in \cite{ART,ART2}. In Section 3, we introduce the {\it Milnor vector field} as previously defined in \cite{ART2} and connect it with the equivalence problem. In section 4, we introduce new criteria which ensure the existence of the Milnor vector field. In last section, as application,  we introduced new classes of real and complex singularities for which  the Conjecture \ref{conjec} holds true. Among them, we point out the \emph{mixed simple \L-maps} which are a special type of  mixed singularities.

\section{Fibrations structures}
As explained in the paper \cite{ART}, given a non-constant analytic map germ \linebreak $G:(\bR^{m},0) \rightarrow (\bR^{p}, 0)$, $m\ge p >0$, the set $\Sing G$ is well-defined as a germ of set on the source space. However, in general, in the target space the germ of sets $G(\Sing G)$ and $\im G$ do not. We make it clear in the next definition.

\begin{definition}\cite{ART}\label{d:nice}
	Let $G:(\bR^{m},0) \rightarrow (\bR^{p}, 0)$, $m\ge p >0$, be a continuous map germ.
	We say that the image  $G(K)$  of a set $K\subset \bR^{m}$ containing $0$  is a \emph{well-defined set germ}
	at $0\in \bR^{p}$ if for any open balls $B_{\e},  B_{\e'}$ centred at 0, with  $\e, \e' >0$, we have the equality of germs
	$[G(B_{\e}\cap K)]_{0} =  [G(B_{\e'}\cap K)]_{0}$.
	
	Whenever the images $\im G$ and  $G(\Sing G)$ are well-defined as germs,  we say that $G$ is a \emph{nice map germ}.
\end{definition}

In \cite{ART} the authors found sufficient conditions to an analytic map germ to be a nice germ and have introduced a good class of maps with this property, namely the map germ of type $ f\bar{g}:(\bC^n,0) \to (\bC,0) $ where $  f,g:(\bC^n,0) \to (\bC,0) $ are holomorphic germs such that the meromorphic function $ f /g $ is irreducible. Moreover, they considered the \emph{discriminant} of a nice map germ $G$, $\Disc G$, as the locus where the topology of the fibers may change, which is a closed subanalytic germ of dimension strictly less than $p$. 

\subsection{Existence of the tube fibration}

\begin{definition}\cite{ART}\label{d:tube}
	Let $G:(\bR^{m},0) \rightarrow (\bR^{p}, 0)$, $m\ge p>0$,  be a non-constant nice analytic map germ.
	We say that $G$ has {\em  Milnor-Hamm tube fibration} 
	if, for any $\e > 0$ small enough, there exists  $0<\eta \ll \e$ such that the restriction:
	\begin{equation}\label{eq:tube}
	G_| :  B^{m}_{\e} \cap G^{-1}( B^{p}_\eta \m \Disc G) \to  B^{p}_\eta \m \Disc G
	\end{equation}
	is a locally trivial fibration over each connected component $\cC_{i}\subset B^{p}_\eta \m \Disc G$, such that it is independent of the choices of $\e$ and $\eta$ up to diffeomorphisms. 
\end{definition}

Let $U \subset \mathbb{R}^m$ be an open set,  $0\in U$,  and let $\rho:U \to \mathbb{R}_{\ge 0}$ be a non-negative proper function which defines the origin, for instance the Euclidean distance. 
 
We consider here the following definition:	

\begin{definition}\label{d:ms}
	Let 	$G:(\mathbb{R}^m, 0) \to (\mathbb{R}^p,0)$ be a non-constant nice analytic map germ. The set germ at the origin:
	\[M_\rho(G):=\overline{\left\lbrace x \in U \mid \rho \not\pitchfork_x G \right\rbrace } \]
	is called the set of \textit{$\rho$-nonregular points} of $G$, or the \emph{Milnor set of $G$}.
\end{definition}

In this paper we will consider only the Euclidean distance function $\rho_{E}$, and we write for short $M(G):= M_{\rho_{E}}(G)$.

\vspace{0.2cm}

The following condition was used in \cite{ART} and \cite{ART2} to ensure the existence of the Milnor-Hamm fibrations.
\begin{equation}\label{eq:main}
\overline{M(G)\m G^{-1}(\Disc G)}\cap V_G \subseteq \{0\}.
\end{equation}

\begin{proposition} \cite[Lemma 3.3]{ART}\label{t:tube}
	Let $G:(\bR^m, 0) \to (\bR^p,0)$ be a non-constant nice analytic map germ, $m\ge p >0$.  If $G$ satisfies condition \eqref{eq:main}, then $G$ has a Milnor-Hamm tube fibration \eqref{eq:tube}.
\end{proposition}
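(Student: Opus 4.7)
The plan is to apply Ehresmann's fibration theorem for manifolds with boundary to $G$ restricted to the compact tube
$$Y_{\e,\eta} := \overline{B^m_\e} \cap G^{-1}(B^p_\eta \m \Disc G),$$
the crucial input being the translation of condition \eqref{eq:main} into the transversality of the boundary sphere $S^{m-1}_\e$ with every fibre $G^{-1}(y)$, $y \in B^p_\eta \m \Disc G$.

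Set $A := M(G) \m G^{-1}(\Disc G)$. Condition \eqref{eq:main} lets me fix $\e_0 > 0$ so that $\overline{A} \cap V_G \cap \overline{B^m_{\e_0}} \subseteq \{0\}$. For any $0 < \e \le \e_0$ the compact set $\overline{A} \cap S^{m-1}_\e$ is therefore disjoint from $V_G$, and continuity and compactness force the continuous function $\|G\|$ to attain a strictly positive minimum $\eta_0 > 0$ on it (or else the set is empty and any $\eta_0 > 0$ works). For every $0 < \eta < \eta_0$ this yields $M(G) \cap S^{m-1}_\e \cap G^{-1}(B^p_\eta \m \Disc G) = \emptyset$, i.e., the Euclidean distance $\rho_E$ is transverse to $G$ at every point of the boundary of $Y_{\e,\eta}$.

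With $(\e,\eta)$ chosen as above, the map $G|_{Y_{\e,\eta}} \colon Y_{\e,\eta} \to B^p_\eta \m \Disc G$ is: (i) a submersion on the interior, because the critical values were excised when we removed $\Disc G$; (ii) a submersion on the boundary sphere by the previous step; and (iii) proper, because $Y_{\e,\eta}$ is a closed subset of the compact ball $\overline{B^m_\e}$. Ehresmann's theorem for manifolds with boundary, applied over each connected component $\cC_i$ of $B^p_\eta \m \Disc G$, then produces a locally trivial fibration with fibre $G^{-1}(y) \cap \overline{B^m_\e}$; removing the boundary sphere delivers the fibration \eqref{eq:tube} on the open ball. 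Independence of the pair $(\e,\eta)$ up to diffeomorphism follows by comparing two admissible pairs $(\e',\eta') \le (\e,\eta)$ via the flow of a vector field on $Y_{\e,\eta} \m \mathrm{int}(Y_{\e',\eta'})$ that is tangent to the fibres of $G$ and has positive component along $\grad \rho_E$; existence of the field is obtained from a partition of unity on local trivialisations, and the uniform boundary transversality makes its flow complete between the radii $\e'$ and $\e$.

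The main obstacle I anticipate is the passage from the germ-level hypothesis \eqref{eq:main} to the quantitative lower bound $\|G\| \ge \eta_0 > 0$ on $\overline{A} \cap S^{m-1}_\e$: one has to pass to the closure $\overline{A}$ before intersecting with the sphere so that compactness yields a positive infimum, and it is precisely the condition $\overline{A} \cap V_G \subseteq \{0\}$ that prevents $\|G\|$ from vanishing on the compact set $\overline{A} \cap S^{m-1}_\e$, which stays at strictly positive distance from $0$.
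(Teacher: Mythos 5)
Your proposal is correct and follows essentially the same route as the result it restates: the paper gives no proof here, citing \cite[Lemma 3.3]{ART}, where condition \eqref{eq:main} is likewise turned, via compactness of $\overline{M(G)\m G^{-1}(\Disc G)}\cap S^{m-1}_{\e}$ and its disjointness from $V_G$, into transversality of the fibres $G^{-1}(y)$, $y\in B^{p}_{\eta}\m \Disc G$, to the boundary sphere, and the conclusion is drawn from properness plus the Ehresmann-type fibration theorem for manifolds with boundary. Two small points deserve tightening but are not gaps: $Y_{\e,\eta}$ is not closed in $\overline{B^{m}_{\e}}$ (properness should instead be checked by noting that $G^{-1}(K)\cap \overline{B^{m}_{\e}}$ is compact for every compact $K\subset B^{p}_{\eta}\m \Disc G$), and for the independence-of-radii flow one needs $\rho$-transversality on the whole shell $\{\e'\le \rho\le \e\}$ intersected with the tube, not only on $S^{m-1}_{\e}$, which follows from the same compactness argument applied to $\overline{M(G)\m G^{-1}(\Disc G)}\cap\{\e'\le \rho\le \e\}$ after shrinking $\eta$.
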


\begin{corollary} [Existence of the Milnor-Hamm empty tube fibration] \label{tube} Let \linebreak $G:(\bR^m, 0) \to (\bR^p,0)$ be a non-constant nice analytic map germ. If $G$ satisfies condition \eqref{eq:main}, then for all small enough $\e > 0$, there exists  $0<\eta \ll \e$ such that the restriction:
	\begin{equation}\label{eq:emptytube}
	G_| :  B^{m}_{\e} \cap G^{-1}( S^{p-1}_\eta \m \Disc G) \to  S^{p-1}_\eta \m \Disc G
	\end{equation} 
	is a locally trivial smooth fibration.
\end{corollary}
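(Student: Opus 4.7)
The plan is to derive the empty tube fibration by restricting the Milnor--Hamm tube fibration supplied by Proposition \ref{t:tube} to the preimage of a sphere lying inside the base of that tube fibration. First I would apply Proposition \ref{t:tube}: hypothesis \eqref{eq:main} yields, for every sufficiently small $\e>0$, a radius $0<\eta_{0}\ll\e$ such that
\[
G_{|}:B^{m}_{\e}\cap G^{-1}\bigl(B^{p}_{\eta_{0}}\setminus\Disc G\bigr)\longrightarrow B^{p}_{\eta_{0}}\setminus\Disc G
\]
is a locally trivial fibration over each connected component of the base. Moreover the fibration is smooth: by the nice-germ hypothesis $G(\Sing G)\subseteq\Disc G$, so over the punctured ball $B^{p}_{\eta_{0}}\setminus\Disc G$ the analytic map $G$ is a smooth submersion.

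Next, I would choose any $\eta$ with $0<\eta<\eta_{0}$ and observe that the sphere $S^{p-1}_{\eta}$ is a closed smooth codimension-one submanifold of the open ball $B^{p}_{\eta_{0}}$. Since $\Disc G$ is a closed subanalytic set of dimension strictly less than $p$, its intersection with $S^{p-1}_{\eta}$ has dimension strictly less than $p-1$, so $S^{p-1}_{\eta}\setminus\Disc G$ is an open dense smooth submanifold of $S^{p-1}_{\eta}$, and in particular a smooth submanifold of the base $B^{p}_{\eta_{0}}\setminus\Disc G$ of the tube fibration. Invoking the elementary fact that the restriction of a locally trivial smooth fibration to a smooth submanifold of its base is again a locally trivial smooth fibration then gives the map \eqref{eq:emptytube} as a locally trivial smooth fibration (with $\eta_{0}$ playing the role of the $\eta$ in the corollary's statement, after relabelling).

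The argument is essentially a formal corollary of Proposition \ref{t:tube}, so there is no substantive obstacle; all the geometric content---the transversality of small spheres with the fibers of $G$ encoded in condition \eqref{eq:main}, and the local trivializations---has already been absorbed into the tube fibration. The only point requiring mild care is the bookkeeping of radii: one must ensure that the sphere used in the empty tube fibration sits strictly inside the open ball where the tube fibration is defined, which is handled by simple shrinking.
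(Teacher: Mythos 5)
Your argument is correct and is essentially the intended one: the paper states this corollary without proof precisely because it follows by restricting the Milnor--Hamm tube fibration of Proposition \ref{t:tube} over the submanifold $S^{p-1}_\eta \setminus \Disc G$ of its base, exactly as you do. Your extra remarks (smoothness via $G(\Sing G)\subseteq \Disc G$ and the radius bookkeeping $\eta<\eta_0$) are the right points to check and are handled correctly.
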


\subsection{Existence of the sphere fibration}

Several author have worked on the problem of fibration over spheres in the real settings, for isolated and non-isolated singularities, e.g. \cite{S1, RSV, PS0, CSS1, dST0, dST1, PS2, PS3, A, RA,ACT}. In the recent paper \cite{ART2}  the authors generalized all previous results as we describe below.

	\begin{definition}\cite{ART2}
		Let $G:(\mathbb{R}^{m},0)\to (\mathbb{R}^{p},0)$ be a nice analytic map germ. We say that its discriminant $\Disc G$ is \emph{radial} if, as a set germ at the origin, it is a union of real half-lines or just the origin. 
	\end{definition} 
	
	
	
	Let  $G:U\to \mathbb{R}^{p}$ be a representative of the nice map germ $G$ for some open set $U \ni 0$.  We consider the map:
	\begin{equation} \label{eq:spherefib}
	{\Psi_G} := \frac{G}{\| G\|}: U \m V_G \to  S^{p-1}.
	\end{equation}

	As we have seen in \cite{ART2}, if $\Disc G$ is radial then the restriction map  
	\begin{equation}\label{ssf}
	\displaystyle{{\Psi_G}_{|} : S_{\epsilon}^{m-1}\m G^{-1}(\Disc G )\to S^{p-1}\m \Disc G }
	\end{equation}
	is well defined for small enough $\epsilon >0$, in this case we say that $ G $ has a \textit{Milnor-Hamm sphere fibration} whenever \eqref{ssf} is a locally trivial smooth fibration which is independent, up to diffeomorphisms, of the choice of $\e$ provided it is small enough.
	
	We consider the Milnor set $M({\Psi_G})$ of the map \eqref{eq:spherefib}, i.e. the germ at the origin of the $\rho$-nonregular points of ${\Psi_G}$. 
	We  say that  \textit{$ \Psi_G$ is $\rho$-regular} if:  
	\begin{equation}\label{eq:reg}
	M ({\Psi_G}) \m G^{-1}(\Disc G)=\emptyset.
	\end{equation}
	
	\begin{theorem}\cite[Theorem 3.5]{ART2}\label{sf2} 
		Let $\displaystyle{G:(\mathbb{R}^{m},0)\to (\mathbb{R}^{p},0)}$, $m>p\ge2$, be non-constant nice analytic map germ with radial discriminant,  satisfying the condition \eqref{eq:main}. If ${\Psi_G}$ is $\rho$-regular then $G$ has a  Milnor-Hamm sphere fibration.
	\end{theorem}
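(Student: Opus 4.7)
The plan is to show that $\Psi_G$ restricted to the punctured sphere is a submersion (using $\rho$-regularity) and then construct a Milnor-type vector field whose flow conjugates the sphere map with the empty tube fibration from Corollary \ref{tube}. This lets us transfer local triviality from the tube to the sphere.

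First, I would verify that $\Psi_G|_{S^{m-1}_\e\m G^{-1}(\Disc G)}$ is well-defined and a submersion. Since $0\in \Disc G$, we have $V_G=G^{-1}(0)\subset G^{-1}(\Disc G)$, so $\Psi_G$ is well-defined on the complement. By radiality of $\Disc G$, the target $S^{p-1}\m \Disc G$ makes sense. The hypothesis $M(\Psi_G)\m G^{-1}(\Disc G)=\emptyset$ means precisely that at every point $x\in S^{m-1}_\e\m G^{-1}(\Disc G)$ the fiber of $\Psi_G$ through $x$ is transverse to $S^{m-1}_\e$, hence $\Psi_G|_{S^{m-1}_\e}$ is a smooth submersion on this open set.

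Next, under the standing hypothesis \eqref{eq:main}, Corollary \ref{tube} gives the empty tube fibration
\begin{equation*}
G_| :  B^{m}_{\e} \cap G^{-1}( S^{p-1}_\eta \m \Disc G) \to  S^{p-1}_\eta \m \Disc G,
\end{equation*}
for $0<\eta\ll\e$. Using the radiality of $\Disc G$, the map $y\mapsto y/\|y\|$ is a diffeomorphism $S^{p-1}_\eta\m \Disc G \to S^{p-1}\m \Disc G$, so we may consider this fibration as having target $S^{p-1}\m \Disc G$. It then suffices to produce a diffeomorphism between $S^{m-1}_\e\m G^{-1}(\Disc G)$ and $B^{m}_{\e} \cap G^{-1}( S^{p-1}_\eta \m \Disc G)$ which conjugates $\Psi_G$ with this composed projection.

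The key construction is a smooth vector field $w$ on $B_\e^m \cap G^{-1}(\bR^p\m\Disc G)\m V_G$ satisfying: (i) $w$ is tangent to the fibers of $\Psi_G$, i.e. $dG_x(w)\in \bR\cdot G(x)$; and (ii) $\langle w(x),x\rangle>0$, so the flow strictly increases $\rho_E$. At each point, a tangent vector to the fiber with nonzero radial component exists by $\rho$-regularity; a global smooth $w$ is assembled by a partition of unity. Radiality of $\Disc G$ guarantees that the integral curves of $w$ stay off $G^{-1}(\Disc G)$, since along a trajectory the argument $G/\|G\|$ is constant and the ray $\bR_{>0}\cdot \Psi_G(x)$ does not meet $\Disc G$ when $\Psi_G(x)\notin\Disc G$. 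Integrating $w$ both forward (from $S^{m-1}_\e$ until hitting the tube $G^{-1}(S^{p-1}_\eta)$) and backward gives the desired diffeomorphism, which conjugates $\Psi_G|_{S^{m-1}_\e\m G^{-1}(\Disc G)}$ with the composition of the tube projection and the radial identification. Local triviality and $\e$-independence of the sphere fibration then follow directly from the corresponding properties of the tube fibration.

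The main obstacle in this plan is ensuring the vector field $w$ exists \emph{globally} and with trajectories that actually reach the tube $G^{-1}(S^{p-1}_\eta)$ while avoiding $G^{-1}(\Disc G)$; this is where both $\rho$-regularity (providing the pointwise radial component) and the radiality of $\Disc G$ (keeping trajectories inside the good set) must be used in tandem. Once the vector field is in place, the transfer of local triviality is a standard Ehresmann-type flow argument.
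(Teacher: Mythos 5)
Your first step is fine: $\rho$-regularity does make $\Psi_G$ a submersion on $S^{m-1}_\e\m G^{-1}(\Disc G)$. The gap is in the core of your plan, the conjugation of the sphere restriction with the tube fibration via the flow of a vector field $w$ that is tangent to the fibers of $\Psi_G$ and satisfies only $\left\langle w(x),x\right\rangle>0$. Such a $w$ has properties $(c_1)$ and $(c_2)$ of Definition \ref{def-gvf} but not $(c_3)$, and $(c_3)$ is not a harmless extra: on $M(G)\m G^{-1}(\Disc G)$ one has $v_2(x)=a(x)\,v_1(x)$, so at any point where $a(x)<0$ every vector tangent to the fiber of $\Psi_G$ with positive radial component strictly \emph{decreases} $\|G\|^2$. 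Hence $\|G\|$ is not monotone along your trajectories: a trajectory issued from the sphere may never reach the tube $\{\|G\|=\eta\}$ and can instead accumulate on $V_G\subset G^{-1}(\Disc G)$ (radiality of $\Disc G$ protects you along rays in the target, but not against $\|G\|\to 0$), and when a trajectory does meet the tube the crossing need not be unique or transverse, so the ``first hit'' map is neither well defined, nor injective, nor smooth. Note that $\rho$-regularity only gives $a(x)\neq 0$ on $M(G)\m G^{-1}(\Disc G)$; the positivity $a(x)>0$, which by Theorem \ref{eqt2} is \emph{equivalent} to the existence of a MVF, is exactly the missing ingredient that makes the ``blow the tube to the sphere'' flow work, and it does not follow from your hypotheses.

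Indeed, if your argument were correct it would prove strictly more than the quoted theorem: it would give the equivalence of \eqref{ssf} and \eqref{sfibration1} under $\rho$-regularity alone, i.e.\ it would settle Conjecture \ref{conjec}, which the paper stresses is open precisely because a MVF is not known to exist in general (the equivalence is only guaranteed through Theorem \ref{tm} once a MVF is available). The cited proof in \cite{ART2} does not go through a diffeomorphism with the tube: it establishes local triviality of \eqref{ssf} directly, roughly by splitting $S^{m-1}_\e\m G^{-1}(\Disc G)$ into the part where $\|G\|\le\eta$, which is controlled by the empty tube structure coming from condition \eqref{eq:main} (transversality of the fibers of $G$ to the sphere near $V_G$), and the compact part where $\|G\|\ge\eta$, handled by Ehresmann for a proper submersion, and then gluing the two pieces. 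To repair your plan you would have to either prove $a(x)>0$ on $M(G)\m G^{-1}(\Disc G)$ (for instance via the criteria of Section \ref{cfrho}) or abandon the conjugation and argue local triviality on the sphere directly as above.
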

	
	In the case where $\Disc G=\{0\}$ the condition \eqref{eq:main} becomes $$\overline{M(G)\m V_{G}}\cap V_G \subseteq \{0\}$$ which has been used in \cite{Ma, dST1, ACT} in order to guarantee the existence of the tube and sphere fibrations. It is also important to point out that under the conditions \eqref{eq:main} and $\Disc G=\{0\}$ the map germ $G$ is nice, see \cite[Corollary 4.7]{Ma}.
	
\subsection{Fibrations equivalence problem}

	As has been seen in \cite{ART}, if the discriminant set $\Disc G$ has positive dimension it intersects all spheres of small enough radius  in the target space. However, the condition \eqref{eq:main} ensure that the restriction \eqref{eq:emptytube}
	is also the projection of a locally trivial smooth fibration.
	
	If $\Disc G$ is radial,  the fibration \eqref{eq:emptytube} may be composed with the canonical projection $\pi:= s/\|s\|: \mathbb{R}^{p}\m \{0\} \to S^{p-1}$ to get a locally trivial smooth fibration
	\begin{equation} \label{sfibration1}
	{\Psi_G}_{|}:B^{m}_\e \cap G^{-1}(S^{p-1}_\eta \m \Disc G) \to S^{p-1}\m  \Disc G.
	\end{equation}
	Moreover, under additional hypothesis of  $\rho$-regularity for $ \Psi_G $, the Theorem \ref{sf2} ensure that the restriction \eqref{ssf} is a Milnor-Hamm sphere fibration. Therefore, the following question becomes natural:
	
	\centerline{\textit{When are the fibrations \eqref{ssf} and \eqref{sfibration1} equivalent?}}
	
	In order to study the equivalence of fibrations in this setting, the authors in \cite{ART2} have adapted the Milnor's method of ``blowing away the tube to the sphere'' introduced in \cite{Mi}, which uses a special vector field 
	as we remind below.

\section{The Milnor Vector Field}

	\begin{definition}\cite{ART2}\label{def-gvf}
		Let $ G:(\bR^m,0) \to (\bR^p,0) $,$ m>p\ge 2 $ be an analytic map germ with radial discriminant. One calls  \textit{Milnor vector field}, abbreviated MVF, a vector field $\nu$ which satisfies the following conditions for any $x\in B^{m}_{\e} \m G^{-1}(\Disc G)$:
		\begin{enumerate}
			\item [($c_1$)] $\nu(x)$ is tangent to the fiber $X_{y}$, where $y=\Psi_G(x)$,
			\item [($c_{2}$)] $\left\langle \nu (x), \nabla \rho(x) \right\rangle >0$, 
			\item [($c_{3}$)] $\left\langle \nu (x), \nabla \|G(x)\|^{2} \right\rangle >0$.
		\end{enumerate}
	\end{definition}
	
	\begin{remark}
		The existence of a MVF  does not insure the existence of the fibrations \eqref{ssf} or \eqref{sfibration1}. Actually, the Milnor-Hamm fibration may not even be well-defined as shown by the Hansen's example $ G(x,y,z)=\left(x^2+y^2,(x^2+y^2)z\right) $  which is not nice.  \end{remark}
	
	As pointed out in \cite{ART2}, the existence of MVF guarantees the equivalence between the Milnor-Hamm fibrations provided that they exist, as shows the next result.
	
	\begin{theorem}\cite[Theorem 4.2]{ART2}\label{tm} 
		Let  $G:(\mathbb{R}^{m},0)\to (\mathbb{R}^{p},0)$, $m>p\geq 2$ be an analytic nice map germ with  radial discriminant, such that  the Milnor-Hamm tube and sphere fibrations exist.
		If  a Milnor vector field exists, then these fibrations are equivalent.
	\end{theorem}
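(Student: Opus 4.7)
The natural strategy is to mimic Milnor's ``blowing up of the tube to the sphere'' argument from \cite{Mi}, now with the discriminant allowed to be positive-dimensional. The goal is to use the flow of the MVF to construct a fibre-preserving diffeomorphism between the total space $\mathcal{D}:=B^{m}_{\e} \cap G^{-1}(S^{p-1}_{\eta}\setminus \Disc G)$ of \eqref{sfibration1} and the total space $\mathcal{S}:= S^{m-1}_{\e}\setminus G^{-1}(\Disc G)$ of \eqref{ssf}, commuting with $\Psi_G$.

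First I would rescale the given MVF to $w := \nu/\langle \nu, \nabla \rho\rangle$, which is smooth and nonvanishing on $B^{m}_{\e}\setminus G^{-1}(\Disc G)$ by $(c_2)$. The flow $\phi_{t}$ of $w$ then satisfies $\rho(\phi_t(x))=\rho(x)+t$, so $\rho$ advances at unit rate. By $(c_1)$ every orbit lies in a single fibre $\Psi_G^{-1}(y)$, and since $\Disc G$ is radial one has $G^{-1}(\Disc G)=\Psi_G^{-1}(\Disc G \cap S^{p-1})$; hence for $y\notin \Disc G$ the whole orbit stays in the domain of $w$, and $\phi_t$ can be continued until it crosses $S^{m-1}_{\e}$. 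Moreover $\|G\|$ is strictly monotone along orbits, by $(c_3)$. Then I would define $h:\mathcal{D}\to \mathcal{S}$ by $h(x):=\phi_{\e-\rho(x)}(x)$. The preceding observations show that $h$ is well-defined into $\mathcal{S}$, fibre-preserving (since $\Psi_G\circ h = \Psi_G$ by $(c_1)$), smooth (by smooth dependence of flows on initial data), and injective on each $\Psi_G$-fibre (by strict monotonicity of $\rho$). For an inverse, given $x' \in \mathcal{S}$ one runs $\phi$ backwards: by $(c_3)$ the function $\|G\|$ strictly decreases, so the backward orbit meets the level $\|G\|=\eta$ at a unique point $x \in \mathcal{D}$ with $h(x)=x'$. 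This yields the desired fibre-preserving diffeomorphism.

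The main delicacy I expect is the surjectivity step: one must verify that the backward orbit from any $x' \in \mathcal{S}$ actually reaches $\|G\|=\eta$ before degenerating (for instance before $\rho\to 0$ or before accumulating onto $G^{-1}(\Disc G)$), which in particular requires $\|G(x')\|>\eta$ throughout $\mathcal{S}$. This can be arranged by the standard choice $0<\eta \ll \e$ compatible with both Milnor--Hamm fibrations, which is admissible because both fibrations are, by hypothesis, independent of such choices up to diffeomorphism. Once the parameters are so fixed, the three conditions $(c_1)$--$(c_3)$ on $\nu$ are precisely what is needed for the flow of $w$ to implement the required equivalence of fibrations.
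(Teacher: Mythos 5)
Your strategy is the intended one (this theorem is quoted from \cite{ART2}, where the proof is exactly an adaptation of Milnor's ``blowing away the tube to the sphere''), and the first half of your construction is sound: rescaling $\nu$ by $\langle\nu,\nabla\rho\rangle$, flowing forward inside a fixed fibre of $\Psi_G$ (conditions $(c_1)$--$(c_3)$ keep the orbit in $B^m_\e\m G^{-1}(\Disc G)$, make $\rho$ advance at unit rate and $\|G\|$ strictly increase), and sending a point of the tube to the point where its orbit meets $S^{m-1}_\e$ does give a smooth, injective, fibre-preserving map $h$ from the total space of \eqref{sfibration1} into that of \eqref{ssf}.

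The genuine gap is the surjectivity step. Your fix --- ``choose $0<\eta\ll\e$ so that $\|G(x')\|>\eta$ throughout $\mathcal{S}=S^{m-1}_\e\m G^{-1}(\Disc G)$'' --- is impossible whenever the link $K_\e=V_G\cap S^{m-1}_\e$ is nonempty, which is the typical situation here (and the only interesting one, e.g.\ holomorphic germs): since $0\in\Disc G$, points of $S^{m-1}_\e$ arbitrarily close to $K_\e$ lie in $\mathcal{S}$ but have $\|G\|$ arbitrarily small, so no positive $\eta$ works; and the ``independence of the choices of $\e,\eta$ up to diffeomorphism'' in the definitions only compares different admissible pairs for the \emph{same} fibration, it cannot produce a lower bound for $\|G\|$ on the sphere. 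Concretely, since $\|G\|$ strictly increases along forward orbits starting on $\{\|G\|=\eta\}$, the image of your $h$ is exactly the region $S^{m-1}_\e\cap\{\|G\|>\eta\}\m G^{-1}(\Disc G)$, and points of $\mathcal{S}$ with $\|G\|\le\eta$ are never reached (their backward orbits drift toward $V_G$ without ever meeting the tube). What is missing is the second half of Milnor's argument: showing that $\Psi_G$ restricted to $S^{m-1}_\e\cap\{\|G\|\ge\eta\}\m G^{-1}(\Disc G)$ is a fibration equivalent to the full sphere fibration \eqref{ssf}, by pushing the collar $S^{m-1}_\e\cap\{0<\|G\|\le\eta\}\m G^{-1}(\Disc G)$ along the fibres of ${\Psi_G}_{|S^{m-1}_\e}$ while increasing $\|G\|$ (this is where the existence/$\rho$-regularity data behind the sphere fibration is actually used; it is the analogue of Lemma 5.9 and the end of \S 5 in \cite{Mi}). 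Without that step the proposed proof only identifies the tube fibration with a proper restriction of the sphere fibration.
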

	
	It is important notice, however, that in the real setting the existence of MVF is no insured in general, like the complex case. This existence problem has been approached by several authors in the last years, first in the case $ \Disc G = \{0\} $ e.g., \cite{A1,dST0,dST1,Oka3,Ha,CSS1}, and more recently in \cite{ART2}.  In each setting, the authors produced sufficient conditions. However, up to now, there is no  clear proof for this problem in its complete generality,  and no counterexample has been found yet \footnote{As have been point out in \cite{ART2},  the existence proof attempt  in \cite{CSS1} for case $ \Disc G = \{0\} $ appears to contain a non-removable gap. For more details, see \cite[Chapter 7]{R}}.  The following conjecture explains what we mean by ``complete generality'': 
	\begin{conjecture}\cite{ART2}\label{conjec}
		Let $G:(\mathbb{R}^{m},0)\to (\mathbb{R}^{p},0)$, $m>p\geq 2$ be an analytic nice map germ with  radial discriminant and such that ${\Psi_G}$ is $\rho$-regular. If both fibrations  \eqref{ssf} and \eqref{sfibration1} exist for any small enough $\e>0$ and $0<\eta\ll \epsilon $, then  they are equivalent.
	\end{conjecture}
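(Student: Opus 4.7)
My plan is to reduce the conjecture to the existence of a Milnor vector field and then construct such a field under the stated hypotheses. Indeed, by Theorem \ref{tm}, once a MVF is produced on $B^{m}_{\e}\m G^{-1}(\Disc G)$, the tube fibration \eqref{sfibration1} and the sphere fibration \eqref{ssf} are automatically equivalent. So the whole task becomes producing a smooth vector field $\nu$ satisfying the conditions $(c_{1})$, $(c_{2})$ and $(c_{3})$ of Definition \ref{def-gvf}.

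I would first use the assumed tube fibration \eqref{eq:tube} to build a preliminary vector field. In each local trivialization chart over $B^{p}_{\eta}\m\Disc G$, one lifts the radial outward field $s\mapsto s$ on the base to a smooth vector field on the total space, and then patches these lifts through a partition of unity on $B^{p}_{\eta}\m\Disc G$. The resulting $\xi$ satisfies $dG(\xi)=\lambda(x)G(x)$ with $\lambda(x)>0$, so $d\Psi_{G}(\xi)=0$ (i.e.\ $(c_{1})$) and $\langle \xi,\nabla\|G\|^{2}\rangle = 2\lambda(x)\|G(x)\|^{2}>0$ (i.e.\ $(c_{3})$). The unique remaining issue is to secure $(c_{2})$: $\langle \xi,\nabla\rho\rangle>0$.

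The next step is a pointwise correction. Adding any $\eta$ with $dG(\eta)=0$ preserves both $(c_{1})$ and $(c_{3})$, so it suffices to find $\eta\in T_{x}G^{-1}(G(x))$ with $\langle \xi+\eta,\nabla\rho\rangle >0$. This is always possible at points $x\notin M(G)$, where $\nabla\rho$ has a nonzero component tangent to $G^{-1}(G(x))$ and one may choose $\eta$ smoothly by orthogonal projection. The delicate points are those of $M(G)\m G^{-1}(\Disc G)$: there $\nabla\rho$ is orthogonal to $T_{x}G^{-1}(G(x))$, no such correction can influence $\langle\cdot,\nabla\rho\rangle$, and the positivity must come from $\xi$ itself.

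At $x\in M(G)\m G^{-1}(\Disc G)$, the $\rho$-regularity condition \eqref{eq:reg} guarantees that $\nabla\rho$ is not orthogonal to the larger subspace $T_{x}X_{\Psi_{G}(x)}$; equivalently, any lift $w\in T_{x}X_{\Psi_{G}(x)}$ of the radial target direction under $dG$ satisfies $\langle\nabla\rho,w\rangle\ne 0$. Since $\xi$ is such a lift up to a positive factor, the requirement $(c_{2})$ reduces to a sign condition on $\langle \nabla\rho,w\rangle$. The main obstacle, and in my view the heart of the conjecture, is to align the local choice of lift $\xi$ with the sign of $\langle \nabla\rho,w\rangle$ in a globally coherent way along the (possibly disconnected) locus $M(G)\m G^{-1}(\Disc G)$, and then to patch this alignment with the free correction argument away from $M(G)$ through a partition of unity on $B^{m}_{\e}\m G^{-1}(\Disc G)$. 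Without an additional hypothesis forcing the positivity $\langle\nabla\rho,w\rangle>0$ in a controlled manner, the partition of unity risks destroying $(c_{2})$ across transition regions; resolving this alignment is precisely where one expects the new sufficient conditions introduced later in the paper to intervene.
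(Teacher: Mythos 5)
The statement you are trying to prove is stated in the paper as a \emph{conjecture} (quoted from \cite{ART2}); the paper does not prove it, and explicitly records that no proof in this generality is known (it even notes that an earlier published attempt for the case $\Disc G=\{0\}$ contains a non-removable gap). So any complete proof you gave would have to go beyond the paper, and yours does not: it contains exactly the gap that makes the statement a conjecture. Your reduction to the existence of a Milnor vector field via Theorem \ref{tm} is the same reduction the paper uses, and your lifted field $\xi$ with $dG(\xi)=\lambda G$ does give $(c_{1})$ and $(c_{3})$ cheaply. The problem is $(c_{2})$ on $M(G)\m G^{-1}(\Disc G)$. By Theorem \ref{eqt2}, a MVF exists if and only if the coefficient $a(x)$ in \eqref{eq} is \emph{strictly positive} on $M(G)\m G^{-1}(\Disc G)$. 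The $\rho$-regularity hypothesis \eqref{eq:reg} only says $M(\Psi_G)\m G^{-1}(\Disc G)=\emptyset$, i.e.\ $a(x)\neq 0$ there; it gives no control of the sign. Your phrase ``any lift $w$ satisfies $\langle\nabla\rho,w\rangle\ne 0$'' is precisely this nonvanishing, and the passage from $\ne 0$ to $>0$ is not a matter of patching or of choosing trivializations coherently: on a fixed connected component of the Milnor set the sign of $a$ is intrinsic (it is $\det D(x)/\det M(x)$ by Lemma \ref{l7}), so if $a<0$ somewhere no partition of unity can repair $(c_{2})$ while keeping $(c_{1})$ and $(c_{3})$. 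Whether the hypotheses of the conjecture force $a>0$ is the open question; you acknowledge this yourself in your last paragraph, so what you have written is a correct reduction plus an honest statement of the unresolved core, not a proof.

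For comparison, the paper's actual contribution is a collection of sufficient conditions that do force the sign: the matricial criterion $\det D(x)>0$ (Theorem \ref{ll}), same multiplicity or pairwise orthogonal gradients (Theorem \ref{spt1}), connectedness of $M(G)\m V_G$ (which propagates the sign obtained along a curve where the tube and the distance both grow), and polar weighted homogeneity. If you want a correct, citable statement out of your argument, reformulate it as: ``under the hypotheses of the conjecture together with $a(x)>0$ on $M(G)\m G^{-1}(\Disc G)$, the fibrations \eqref{ssf} and \eqref{sfibration1} are equivalent'' --- but that is exactly Theorem \ref{eqt2} combined with Theorem \ref{tm}, not the conjecture itself.
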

	
	
	
	For the sake of simplicity, given $x\in B_{\e}^{m} \m V_G$ we will assume that $x$ belongs to the open set $\{G_1(x) \neq 0\}$. All next results do not depend on the particular choice of the open set.
	
	It is well known that $\Sing G \subset M (G)$ and $M(\Psi_G) \subset M(G) \m V_G$. Moreover, we notice that, in the open set $\{G_1(x) \neq 0\}$ one can express the condition $x\in M(G) \m G^{-1}(\Disc G)$ by
	\begin{equation}\label{eq}
	\displaystyle{\nabla \rho(x)=a(x)\nabla \|G(x)\|^{2}+\Sigma_{k=2}^p\alpha_{k}(x)\Omega_{k}(x)},
	\end{equation}
	where $\Omega_k = G_1\nabla G_k - G_k \nabla G_1$, for $k=2,\ldots, p$  are the generators of the normal space $T_{x}X_{y}^{\perp}$ in $\mathbb{R}^{m}$, with $T_{x}X_{y}$ the tangent space of the fiber $X_{y}=\Psi_G^{-1}(y)$, over a regular value $y=\Psi_{G}(x)$. Therefore, one has that for any $x\in M(G)\m G^{-1}(\Disc G)$, $a(x)=0$ if and only if $x\in M(\Psi_G)\m G^{-1}(\Disc G)$. This shows the importance of coefficient $ a(x) $ for study of $ \rho  $-regularity of $ \Psi_G $.  On the other hand, the next theorem characterize the existence of a MVF for $G$ in terms of $ a(x) $. 
	
	\begin{theorem}\cite[Existence of MFV]{ART2}\label{eqt2} Let $G:(\mathbb{R}^{m},0)\to (\mathbb{R}^{p},0),$ $m\geq p \geq 2$. Then the following are equivalent:
		
		\begin{enumerate}
			\item[$(i)$] there exists a MVF for $G$ on $B_{\epsilon}\m G^{-1}(\Disc G),$ for some enough small $\epsilon >0$;
			\item[$(ii)$]  $a(x)>0,$ for any $x\in M(G)\m G^{-1}(\Disc G)$. 
		\end{enumerate}
	\end{theorem}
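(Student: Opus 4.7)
The plan is to prove the two implications separately, both hinging on the decomposition \eqref{eq} and the identity $T_{x}X_{y}^{\perp}=\mathrm{span}(\Omega_{2},\ldots,\Omega_{p})$.

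For $(i)\Rightarrow(ii)$, suppose a MVF $\nu$ exists and fix $x\in M(G)\setminus G^{-1}(\Disc G)$. Condition $(c_1)$ forces $\nu(x)\perp\Omega_k(x)$ for every $k$, so taking the inner product of \eqref{eq} with $\nu(x)$ collapses all the $\Omega_k$-terms and yields
\begin{equation*}
\langle \nu(x),\nabla\rho(x)\rangle \;=\; a(x)\,\langle \nu(x),\nabla\|G(x)\|^{2}\rangle.
\end{equation*}
The left side is positive by $(c_2)$ and the last factor is positive by $(c_3)$, forcing $a(x)>0$.

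For $(ii)\Rightarrow(i)$, the plan is to exhibit $\nu$ by a chart-free explicit formula. Denote by $u(x)$ and $v(x)$ the orthogonal projections onto $T_{x}X_{y}$ of $\nabla\rho(x)$ and $\nabla\|G(x)\|^{2}$ respectively, and set
\begin{equation*}
\nu(x) \;:=\; \frac{u(x)}{\|u(x)\|} + \frac{v(x)}{\|v(x)\|}.
\end{equation*}
Condition $(c_1)$ is automatic, and a short calculation together with Cauchy--Schwarz gives
\begin{equation*}
\langle \nu,\nabla\rho\rangle \;=\; \langle\nu,u\rangle \;=\; \frac{\|u\|\|v\|+\langle u,v\rangle}{\|v\|}, \qquad \langle\nu,\nabla\|G\|^{2}\rangle \;=\; \langle\nu,v\rangle \;=\; \frac{\|u\|\|v\|+\langle u,v\rangle}{\|u\|},
\end{equation*}
so $(c_2)$ and $(c_3)$ hold provided $u$ and $v$ are nowhere zero nor strictly antiparallel.

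The real work is therefore to verify these two geometric facts on all of $B_\epsilon\setminus G^{-1}(\Disc G)$, using hypothesis (ii). Working locally in a chart $\{G_i\neq 0\}$ and writing $A:=dG_x$ (of maximal rank off $G^{-1}(\Disc G)$), one sees that $(\ker A)^{\perp}$ contains $T_xX_y^{\perp}$ as a hyperplane, whose orthogonal complement inside $(\ker A)^{\perp}$ is the line $\bR u_1$ with $u_1:=A^{\top}(AA^{\top})^{-1}G(x)$. A short direct computation gives $v(x)=2\alpha(x)\,u_1(x)$ with $\alpha(x)=\|G(x)\|^{2}/\langle G(x),(AA^{\top})^{-1}G(x)\rangle>0$, so $v\neq 0$ throughout. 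For $u$, split into two cases: if $x\notin M(G)$, then $\nabla\rho$ has a nonzero component in $\ker A\subset T_{x}X_{y}$ which survives the projection and is absent from $v\in\bR u_1$, so $u$ and $v$ are linearly independent; if $x\in M(G)\setminus G^{-1}(\Disc G)$, projecting \eqref{eq} onto $T_{x}X_{y}$ annihilates every $\Omega_k$ and yields $u(x)=a(x)\,v(x)$, so hypothesis (ii) forces $u$ and $v$ to be parallel with the \emph{same} sign, ruling out both $u=0$ and antiparallelism.

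The main obstacle is precisely this last case: on $M(G)$ the vectors $u$ and $v$ are a priori collinear, and hypothesis (ii) is exactly the condition that distinguishes ``collinear and aligned'' from ``collinear and opposite''. A final remark is that the construction is automatically globally defined and smooth on $B_\epsilon\setminus G^{-1}(\Disc G)$, since $u$ and $v$ are intrinsic orthogonal projections onto the smoothly varying subspace $T_{x}X_{y}$, so no chart-patching or partition of unity is required.
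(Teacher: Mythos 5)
Your proof is correct and follows essentially the same route as the paper: your $u$ and $v$ are precisely the paper's projected fields $v_2$ and $v_1$, your $\nu = u/\|u\|+v/\|v\|$ is the vector field \eqref{eqe7}, and your positivity analysis of $\|u\|\|v\|+\langle u,v\rangle$ (linear independence off $M(G)$, alignment $u=a\,v$ with $a>0$ on $M(G)$) is exactly the paper's no-zero/linear-independence discussion made quantitative via the explicit formula $v=2\alpha\,u_1$. Your $(i)\Rightarrow(ii)$ step, obtained by pairing \eqref{eq} with $\nu$ and using $(c_1)$--$(c_3)$, is the natural converse that the paper leaves to \cite{ART2} without detail.
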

	
	In order to prove Theorem \ref{eqt2}, the author have considered the following vector fields on $B_{\e}^{m} \m G^{-1}(\Disc G)$: 
	\begin{equation*}\label{eqe6}
	v_{1}(x):=\proj_{T_{x}X_{y}}(\nabla \|G(x)\|^{2}) \textrm{  and  }  v_{2}(x):=\proj_{T_{x}X_{y}}(\nabla \rho(x)).\\
	\end{equation*}
	
	They have noted that vector field $v_{1}$ has no zeros  since the tube $\|G(x)\|^{2}= const.$ is transversal to $X_{y}$, for  $y\not\in \Disc G$ and the vector field $v_{2}$ has no zeros on $B_{\e}^{m}\setminus G^{-1}(\Disc G)$ since $ a(x)>0 $ for any $x\in M(G)\m G^{-1}(\Disc G)$, i.e., $ \Psi_G $ is $ \rho $-regular. Consequently, the vector field 
	\begin{equation}\label{eqe7}
	\nu(x)= \displaystyle{\frac{v_{1}(x)}{\|v_{1}(x)\|}+\frac{v_{2}(x)}{\|v_{2}(x)\|}}
	\end{equation}
	is well-defined on $B_{\e}^{m}\setminus G^{-1}(\Disc G)$
	and has no zero on $ M(G)\m G^{-1}(\Disc G)$. Moreover,  the vectors $v_{1}(x)$ and $v_{2}(x)$ are linearly independent  if, and only if  $x \not\in M(G) \cup G^{-1}(\Disc G)$, (for more details, see \cite{ART2}) hence, $ \nu $  has no singularity on $B_{\epsilon}^m \m  G^{-1}(\Disc G)$. Finally, they gets that the conditions $(c_1)-(c_3)$ are satisfied. Hence, $ \nu $ is a MVF for $G$ on $B_{\epsilon}\m G^{-1}(\Disc G)$.

 Next section we will introduce several sufficient conditions in order to guarantee the existence of such a vector field.

\section{Criteria for MVF existence}\label{cfrho}

Motived by Theorem \ref{eqt2} in this section we give some descriptions of $a(x)$ to ensure the existence of a $ MVF $ for analytic maps $ G $.

\subsection{A Matricial Criterion}
Here we give a matricial criterion for deciding when exists a MVF for a map germ $ G$.

\begin{lemma}\label{l7}
	Let $G:(\mathbb{R}^{m},0)\to (\mathbb{R}^{p},0)$ be smooth map germ. For any $x\in M (G) \m (V_G \cup \Sing G)$ one has that:
	\begin{equation}\label{charac2}
	\displaystyle{a(x)=\frac{\det D(x)}{\det M(x)}} 
	\end{equation}
	where
	
	$$D(x)= \left[ \begin{array}{cccc}
	\langle \nabla \rho(x),  \nabla (\|G(x)\|^{2}) \rangle & \langle \Omega_{2}(x), \nabla (\|G(x)\|^{2}) \rangle  & \cdots & \langle \Omega_{p}(x), \nabla (\|G(x)\|^{2}) \rangle \\
	\langle \nabla \rho(x),  \Omega_{2}(x) \rangle & \langle \Omega_{2}(x), \Omega_{2}(x) \rangle  & \cdots & \langle \Omega_{p}(x), \Omega_{2}(x) \rangle \\
	\vdots & \vdots & \ddots & \vdots \\
	\langle \nabla \rho(x),  \Omega_{p}(x) \rangle & \langle \Omega_{2}(x), \Omega_{p}(x) \rangle  & \cdots & \langle \Omega_{p}(x), \Omega_{p}(x) \rangle
	
	\end{array} \right] $$
	
	\bigskip
	and,
	
	$$M(x)= \left[ \begin{array}{cccc}
	\langle \nabla (\|G(x)\|^{2}),  \nabla (\|G(x)\|^{2}) \rangle & \langle \Omega_{2}(x), \nabla (\|G(x)\|^{2}) \rangle  & \cdots & \langle \Omega_{p}(x), \nabla (\|G(x)\|^{2}) \rangle \\
	\langle \nabla (\|G(x)\|^{2}),  \Omega_{2}(x) \rangle & \langle \Omega_{2}(x), \Omega_{2}(x) \rangle  & \cdots & \langle \Omega_{p}(x), \Omega_{2}(x) \rangle \\
	\vdots & \vdots & \ddots & \vdots \\
	\langle \nabla (\|G(x)\|^{2}),  \Omega_{p}(x) \rangle & \langle \Omega_{2}(x), \Omega_{p}(x) \rangle  & \cdots & \langle \Omega_{p}(x), \Omega_{p}(x) \rangle
	
	\end{array} \right] .\bigskip$$
\end{lemma}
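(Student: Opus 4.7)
The plan is to regard equation \eqref{eq} as a $p\times p$ linear system in the scalar unknowns $a(x), \alpha_{2}(x),\ldots,\alpha_{p}(x)$ and to recover $a(x)$ by Cramer's rule. First I would take the Euclidean inner product of both sides of
\[\nabla \rho(x) = a(x)\nabla \|G(x)\|^{2} + \sum_{k=2}^{p}\alpha_{k}(x)\Omega_{k}(x)\]
successively with the $p$ vectors $\nabla \|G(x)\|^{2}, \Omega_{2}(x), \ldots, \Omega_{p}(x)$. Reading off the resulting system, the coefficient matrix on the unknowns is precisely $M(x)$ and the column of right-hand sides matches exactly the first column of the matrix $D(x)$ displayed in the statement. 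Cramer's rule then yields
\[a(x) = \frac{\det D(x)}{\det M(x)},\]
so the only substantive task is to verify that $M(x)$ is nonsingular at such $x$.

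For the nondegeneracy I would observe that $M(x)$ is the Gram matrix of the ordered family $\{\nabla \|G(x)\|^{2}, \Omega_{2}(x), \ldots, \Omega_{p}(x)\}$, so $\det M(x)\neq 0$ iff those $p$ vectors are linearly independent in $\mathbb{R}^{m}$. Working in the chart $\{G_{1}\neq 0\}$ as the excerpt does, and using $x\notin \Sing G$ so that $\nabla G_{1}(x),\ldots,\nabla G_{p}(x)$ are linearly independent, I would expand a putative relation
\[c_{0}\nabla \|G\|^{2} + \sum_{k=2}^{p} c_{k}\Omega_{k} = 0\]
via $\nabla \|G\|^{2}=2\sum_{i}G_{i}\nabla G_{i}$ and $\Omega_{k}=G_{1}\nabla G_{k}-G_{k}\nabla G_{1}$, and compare coefficients of each $\nabla G_{i}$. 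The equations coming from $j\geq 2$ force $c_{j}=-2c_{0}G_{j}/G_{1}$, and substituting back into the coefficient of $\nabla G_{1}$ collapses to $2c_{0}\|G(x)\|^{2}/G_{1}=0$. Since $x\notin V_{G}$ the factor $\|G(x)\|^{2}$ is nonzero, so $c_{0}=0$ and hence all $c_{k}=0$, giving the required independence.

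The main (and essentially only) obstacle is this nondegeneracy check; it is precisely where all three hypotheses $x\notin \Sing G$, $x\notin V_{G}$, and the choice of chart $G_{1}\neq 0$ enter, through the factor $\|G\|^{2}/G_{1}$ appearing at the end of the dependence calculation. Once $M(x)$ is known to be invertible, the displayed formula follows mechanically from Cramer's rule, the identification of $D(x)$ with the matrix obtained by replacing the first column of $M(x)$ by the right-hand side of the inner-product system being immediate on inspection of the entries in the statement.
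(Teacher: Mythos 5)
Your proposal is correct and takes essentially the same route as the paper: both read equation \eqref{eq} as the linear system $M(x)\,L(x)=T(x)$ obtained by pairing with $\nabla\|G(x)\|^{2},\Omega_{2}(x),\ldots,\Omega_{p}(x)$ and recover $a(x)$ by Cramer's rule, $D(x)$ being $M(x)$ with its first column replaced by $T(x)$. The only difference is in detail: where you verify the linear independence of $\{\nabla\|G\|^{2},\Omega_{2},\ldots,\Omega_{p}\}$ by an explicit coefficient computation in the chart $\{G_{1}\neq 0\}$ (using $x\notin \Sing G$ and $\|G(x)\|^{2}\neq 0$), the paper simply asserts nonsingularity of $M(x)$ from $x\notin V_{G}\cup\Sing G$ and records $\det M(x)>0$ via Lagrange's identity as a sum of squares of $p\times p$ minors, so your argument usefully fills in that asserted step.
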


\begin{proof}
	Let  $x\in M (G) \m (V_G \cup \Sing G)$. From equation (\ref{eq}) one gets the matrix equation $\displaystyle{T(x)=M(x)\cdot L(x),}$ where 
	
	\vspace{0.2cm}
	
	$$T(x)=\left[ \begin{array}{c} \langle \nabla \rho(x), \nabla (\|G(x)\|^{2}) \rangle \\  \langle \nabla \rho(x), \Omega_{2}(x) \rangle \\ \vdots \\ \langle \nabla \rho(x), \Omega_{p}(x) \rangle \end{array} \right], $$
	
	\vspace{0.2cm}
	
	$$M(x)= \left[ \begin{array}{cccc}
	\langle \nabla (\|G(x)\|^{2}),  \nabla (\|G(x)\|^{2}) \rangle & \langle \Omega_{2}(x), \nabla (\|G(x)\|^{2}) \rangle  & \cdots & \langle \Omega_{p}(x), \nabla (\|G(x)\|^{2}) \rangle \\
	\langle \nabla (\|G(x)\|^{2}),  \Omega_{2}(x) \rangle & \langle \Omega_{2}(x), \Omega_{2}(x) \rangle  & \cdots & \langle \Omega_{p}(x), \Omega_{2}(x) \rangle \\
	\vdots & \vdots & \ddots & \vdots \\
	\langle \nabla (\|G(x)\|^{2}),  \Omega_{p}(x) \rangle & \langle \Omega_{k}(x), \Omega_{p}(x) \rangle  & \cdots & \langle \Omega_{p}(x), \Omega_{p}(x) \rangle
	
	\end{array} \right] ,$$
	
	\vspace{0.2cm}
	
	$$L(x)= \left[ \begin{array}{c} a(x) \\ \alpha_{2}(x) \\ \vdots \\ \alpha_{k}(x) \end{array} \right].$$
	
	The matrix $M(x)$ is non-singular because $x\notin (V_G \cup\Sing G )$. Moreover by Lagrange's identity its determinant is given by the sum of the squares of all $p\times p$ sub-determinants of the matrix
	
	$$\left[ \begin{array}{c} \nabla (\|G(x)\|^{2}) \\  \Omega_{2}(x)  \\ \vdots \\  \Omega_{p}(x) \end{array} \bigskip \right].$$ \\
	
	Hence, $\det M(x)>0$ and one can write $L(x)=M(x)^{-1}T(x).$  By Cramer's rule the  coefficient $a(x)$ can be written as 
	$$\displaystyle{a(x)=\frac{\det D(x)}{\det M(x)}} $$ \end{proof}

\begin{remark}\label{spr1} It follows form the previous proof that the rank of the matrix $D(x)$ can be understood as the rank of two matrices $A(x)$ and $B(x)$ below considering $D(x)=A(x)\cdot B(x)$:
	
	$$ A(x)= \left[ \begin{array}{c}
	\nabla\|G\|^2 \\
	\Omega_2 \\
	\vdots \\
	\Omega_p
	\end{array} \right]_{p\times m} \text{and} \hspace{1cm} 
	B(x)=\left[\begin{array}{cccc}
	\nabla \rho & \Omega_2 & \cdots & \Omega_p
	\end{array} \right]_{m\times p}.
	$$
	
	Hence, the rank of the matrix $D(x)$ is maximal if, and only if the ranks of the matrices $A(x)$ and $B(x)$ are maximal. Nevertheless, under the condition $x \notin(V_G \cup \Sing G)$ one has that the rank of $D(x)$ is maximal if, and only if the rank of $B(x)$ is maximal. In the case of $ \Disc G = \{0\} $ it   amounts to saying that $x\notin M(\Psi_G)$. 
\end{remark}

\begin{theorem}\label{ll}
Let $G:=(G_1,\ldots,G_p): (\mathbb{R}^m,0) \to (\mathbb{R}^p,0)$ be an analytic map germ. If $\textrm{det}\, D(x)> 0$, then $a(x)>0$ for all $x\in M (G) \m (V_G \cup \Sing G)$. In particular, if $\Disc G=\{0\}$, then there exists a MVF for $ G $ on $ B^{m}_{\e} \m V_{G} $ and $\Psi_G$ is $\rho$-regular.
\end{theorem}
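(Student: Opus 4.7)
The plan is to deduce both statements essentially as corollaries of the matricial formula in Lemma \ref{l7} combined with the equivalence in Theorem \ref{eqt2}.

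First, I would handle the main implication. Fix any $x\in M(G)\setminus (V_G\cup \Sing G)$. Lemma \ref{l7} gives the identity $a(x)=\det D(x)/\det M(x)$, so all that is needed is to certify that the denominator is strictly positive at such $x$. This is precisely the content inside the proof of Lemma \ref{l7}: by Lagrange's identity, $\det M(x)$ equals the sum of squares of the $p\times p$ minors of the $p\times m$ matrix whose rows are $\nabla\|G(x)\|^{2}, \Omega_{2}(x),\ldots,\Omega_{p}(x)$, and the condition $x\notin V_G\cup \Sing G$ guarantees that these rows are linearly independent, so $\det M(x)>0$. Combined with the hypothesis $\det D(x)>0$, this yields $a(x)>0$.

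Next, I would specialise to the case $\Disc G=\{0\}$. The point to observe is that $G(\Sing G)\subseteq \Disc G=\{0\}$ forces $\Sing G\subseteq G^{-1}(0)=V_G$, so that $V_G\cup\Sing G=V_G$ and consequently
\[
M(G)\setminus(V_G\cup\Sing G)\;=\;M(G)\setminus V_G\;=\;M(G)\setminus G^{-1}(\Disc G).
\]
The first part then says that $a(x)>0$ on the whole of $M(G)\setminus G^{-1}(\Disc G)$, which is exactly condition $(ii)$ in Theorem \ref{eqt2}. Applying the equivalence $(ii)\Leftrightarrow (i)$ produces a Milnor vector field on $B^{m}_{\e}\setminus V_G$ for some small enough $\e>0$.

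Finally, for the $\rho$-regularity of $\Psi_G$, I would invoke the observation made just before Theorem \ref{eqt2}: for $x\in M(G)\setminus G^{-1}(\Disc G)$, one has $a(x)=0$ if and only if $x\in M(\Psi_G)\setminus G^{-1}(\Disc G)$. Since we have just shown that $a(x)>0$ everywhere on $M(G)\setminus V_G$, this set is empty, i.e., $M(\Psi_G)\setminus V_G=\emptyset$, which is the defining condition \eqref{eq:reg} of $\rho$-regularity. There is essentially no hard step here; the only subtlety worth flagging is ensuring that $\Sing G$ does not escape control, and this is handled for free by the standard inclusion $\Sing G\subseteq G^{-1}(G(\Sing G))\subseteq G^{-1}(\Disc G)$ that becomes $\Sing G\subseteq V_G$ under the hypothesis $\Disc G=\{0\}$.
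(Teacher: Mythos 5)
Your proposal is correct and follows exactly the route of the paper, whose proof of this theorem is simply the citation of Lemma \ref{l7} (for the identity $a(x)=\det D(x)/\det M(x)$ with $\det M(x)>0$) together with Theorem \ref{eqt2}; you have merely spelled out the details, including the observation that $\Disc G=\{0\}$ forces $\Sing G\subseteq V_G$ and the link between $a(x)>0$ and $\rho$-regularity via $M(\Psi_G)$.
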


\begin{proof} The proof follows from Lemma \ref{l7} and Theorem \ref{eqt2}. \end{proof}

\begin{remark}
	In \cite{Ma} the author addressed the case $p=2$ and considered $G $ satisfying the ``\textit{Milnor condition $(c)$'' at the origin} if $$\displaystyle{ \|\omega(x)\|^{2}\langle \nabla \rho(x),  \nabla (\|G(x)\|^{2}) \rangle - \langle \omega(x), \nabla (\|G(x)\|^{2}) \rangle \cdot \langle \omega(x), \nabla \rho(x) \rangle}>0,$$ where $\omega(x):=\Omega_{2}(x).$ Using our notations it is equivalent to say that $\det \,D(x)>0$. Thus, our condition ``$\det\, D(x)>0$'' can be thought as a kind of generalization of the Milnor condition $(c)$ at the origin.
\end{remark}

The result below shows that the Milnor tube expands in the radial direction. Hence, since $\Disc G = \{0\}$, in order to inflate the smooth Milnor tube to the sphere\footnote{as was made by Milnor in \cite{Mi},} one can replace the Milnor vector field by the direction $\nabla \|G(x)\|^2$.

\begin{proposition}\label{f}
	Let $ G=(G_1,\ldots, G_p): (\mathbb{R}^m,0) \to (\mathbb{R}^p,0) $ be an analytic map germ, with $ m>p\ge2 $. Then  for any  $x\in B^m_\e \m (V_G \cup \Sing G)$ one has that $ \left\langle \nabla \|G(x)\|^2, \nabla \rho (x) \right\rangle  >0. $
\end{proposition}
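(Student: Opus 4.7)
The plan is to argue by contradiction via the curve selection lemma. Since $\nabla\rho(x) = x/\|x\|$, the desired inequality is equivalent to $\langle \nabla\|G(x)\|^2, x\rangle > 0$. If this were to fail arbitrarily close to the origin, the semianalytic set
\[A := \{x \in B^m_\e \setminus (V_G \cup \Sing G) : \langle \nabla\|G(x)\|^2, x\rangle \le 0\}\]
would accumulate at $0$, and the curve selection lemma would produce a real-analytic arc $\gamma:[0,\eta) \to \bR^m$ with $\gamma(0)=0$ and $\gamma(s) \in A$ for all $s \in (0,\eta)$. Along this arc the analytic function $f(s) := \|G(\gamma(s))\|^2$ satisfies $f(0)=0$ and $f(s)>0$ for $s>0$, so it admits an expansion $f(s) = a s^{\alpha} + \text{higher order terms}$ with $a>0$ and $\alpha\ge 1$, which forces $f'(s) > 0$ for all sufficiently small $s > 0$.

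The key step is a differentiated Euler-type identity along the arc. Writing $\gamma(s) = s^\lambda v + s^{\lambda+1}w + \cdots$ with $v \ne 0$, an elementary computation gives $s\gamma'(s) = \lambda\gamma(s) + R(s)$ where $\|R(s)\| = O(s^{\lambda+1})$. Pairing this with $\nabla\|G(\gamma(s))\|^2$ and using $f'(s) = \langle \nabla\|G(\gamma(s))\|^2, \gamma'(s)\rangle$ yields the identity
\[sf'(s) \;=\; \lambda \langle \nabla\|G(\gamma(s))\|^2,\, \gamma(s)\rangle \;+\; \langle \nabla\|G(\gamma(s))\|^2,\, R(s)\rangle.\]
Under the contradiction hypothesis the first term on the right is $\le 0$, while the left-hand side has positive leading behaviour of order $\alpha$; hence it suffices to bound the cross term by something of strictly higher order in $s$. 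The natural estimate is to combine $\|\nabla(\|G\|^2)\| \le 2\|DG\|\cdot \|G\|$ with the orders of $\|G(\gamma(s))\| = \sqrt{f(s)}$, of $\|DG(\gamma(s))\|$ (controlled by the order of $G$ at the origin), and of $\|R(s)\|$, and then compare with the leading order of $sf'(s)$.

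The main obstacle lies precisely in this final order comparison. In the generic case, when the leading homogeneous part $G_{d_0}$ of $G$ does not vanish in the direction $v$, all the orders along $\gamma$ are governed uniformly by $d_0$ and $\lambda$, the cross term is immediately seen to be $o(sf'(s))$, and the contradiction closes. When $G_{d_0}(v) = 0$, however, both $G(\gamma(s))$ and $DG(\gamma(s))$ drop in their apparent orders, and one must track refined expansions of $G$, of $DG$, and of $\gamma$ in tandem, using the factorization $G(\gamma(s)) = b\,s^\mu + \cdots$ together with the vanishing of the relevant jets of $G$ along the arc, to ensure the cross term stays strictly higher-order than $sf'(s)$. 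This careful bookkeeping is the technical heart of the argument, and where I expect the bulk of the work to go.
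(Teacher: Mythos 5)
Your proof is incomplete at exactly the point you flag, and that gap cannot be closed, because the inequality you set out to establish fails at points of $B^m_\e \setminus (V_G \cup \Sing G)$ no matter how small $\e$ is. Take $G(x,y,z)=(y-x^2,\,z)$, for which $\Sing G=\emptyset$ and $V_G=\{y=x^2,\ z=0\}$, and consider $x_t=(t,\tfrac{3}{2}t^2,0)$ with $t\neq 0$ small. Then $G(x_t)=(\tfrac12 t^2,0)\neq 0$, $\nabla\|G(x_t)\|^2=(-2t^3,\,t^2,\,0)$, and $\langle \nabla\|G(x_t)\|^2,\,x_t\rangle=-\tfrac12 t^4<0$. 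Running your scheme along $\gamma(s)=(s,\tfrac32 s^2,0)$, which lies in your set $A$: $f(s)=\tfrac14 s^4$, $sf'(s)=s^4$, $\lambda=1$, $R(s)=(0,\tfrac32 s^2,0)$, and the two terms on the right of your identity are $-\tfrac12 s^4$ and $+\tfrac32 s^4$. So the cross term $\langle\nabla\|G\|^2,R\rangle$ has exactly the same order as $sf'(s)$ and dominates the main term; the estimate you defer to the end is not delicate bookkeeping, it is false. Note that this example sits precisely in the degenerate case you single out: the linear leading part $(y,z)$ of $G$ vanishes in the direction $v=(1,0,0)$.

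For comparison, the paper gives no argument of its own; it refers to the proof of Theorem 1' of Fukuda. What the curve selection argument there (and in Milnor's book) actually proves is a restricted statement: for $x$ near the origin with $G(x)\neq 0$ at which $\nabla\|G(x)\|^2$ is a scalar multiple of $\nabla\rho(x)$, i.e.\ $x$ is a critical point of $\|G\|^2$ restricted to the sphere $S_{\|x\|}$, that scalar is positive. In that setting your strategy terminates in two lines with no expansion of $\gamma$ at all: along an arc in the bad set one has $\nabla\|G(\gamma(s))\|^2=\lambda(s)\,\gamma(s)$ with $\lambda(s)\le 0$, hence $f'(s)=\lambda(s)\langle\gamma(s),\gamma'(s)\rangle\le 0$, while $\langle\gamma,\gamma'\rangle=\tfrac12\,(\|\gamma\|^2)'>0$ for small $s>0$ and $f(0)=0$, contradicting $f>0$; the parallelism makes the remainder $R(s)$ disappear from the computation. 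So the repair is not a finer order comparison but building the tangency condition $\nabla\|G(x)\|^2\parallel x$ (or membership in an appropriate $\rho$-nonregular set) into the set $A$ to which you apply the curve selection lemma; for all $x$ in the punctured ball, as the statement is literally phrased and as you attempt to prove it, the inequality does not hold.
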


\begin{proof} The proof can safely be left as an exercise to the reader. It also can be found in the proof of Theorem 1' by Fukuda \cite{F2}. \end{proof}

As an application of the matricial criterion one has:

\begin{corollary}\label{f1} Let $G: (\mathbb{R}^m,0) \to (\mathbb{R}^p,0)$,  $m>p\ge2$ be an analytic map germ. Suppose that for any $x\in M (G) \m (V_G \cup \Sing G)$ one has either $\left\langle \nabla \|G(x)\|^2, \Omega_{j} (x) \right\rangle = 0$, or  $\left\langle \nabla \rho(x), \Omega_{j} (x) \right\rangle = 0$, for $2\leq j \leq p$. Then $a(x)>0$ for all $x\in M (G) \m (V_G \cup \Sing G)$. In particular, whenever $\Disc G=\{0\}$, there exists a MVF for $ G $ on $ B^{m}_{\e} \m V_{G} $ and $\Psi_G$ is $\rho$-regular.
\end{corollary}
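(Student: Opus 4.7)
The plan is to apply Theorem \ref{ll}: it suffices to show that $\det D(x)>0$ for every $x\in M(G)\setminus(V_{G}\cup\Sing G)$.

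I read the dichotomy in the hypothesis as being uniform in $j$ at each such $x$: either $\langle \nabla\|G(x)\|^{2},\Omega_{j}(x)\rangle=0$ for every $j\in\{2,\dots,p\}$, or $\langle \nabla\rho(x),\Omega_{j}(x)\rangle=0$ for every such $j$. In the first alternative, the top row of $D(x)$ collapses to $(\langle \nabla\rho,\nabla\|G\|^{2}\rangle,0,\dots,0)$, and cofactor expansion along it yields
\[
\det D(x) \;=\; \langle\nabla\rho(x),\nabla\|G(x)\|^{2}\rangle\cdot\det\bigl[\langle\Omega_{j},\Omega_{i}\rangle\bigr]_{i,j=2}^{p}.
\]
The leading factor is strictly positive by Proposition \ref{f}. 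The $(p-1)\times(p-1)$ factor is the Gram determinant of $\Omega_{2},\dots,\Omega_{p}$; working in the open set $\{G_{1}\neq 0\}$ at a point $x\notin\Sing G$, the linear independence of $\nabla G_{1},\dots,\nabla G_{p}$ transfers to $\Omega_{2},\dots,\Omega_{p}$ by the triangular invertibility argument already used in the proof of Lemma \ref{l7}, so this Gram determinant is positive as well. The second alternative is handled symmetrically by expanding along the first column of $D(x)$, which now has a single nonzero entry at position $(1,1)$.

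Once $\det D(x)>0$ is established on $M(G)\setminus(V_{G}\cup\Sing G)$, Theorem \ref{ll} delivers both conclusions of the corollary in one stroke: $a(x)>0$ on this set, and, under the additional hypothesis $\Disc G=\{0\}$ (so that $G^{-1}(\Disc G)=V_{G}$ and $\Sing G\subseteq V_{G}$), the existence of an MVF for $G$ on $B_{\e}^{m}\setminus V_{G}$ together with the $\rho$-regularity of $\Psi_{G}$.

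The main obstacle I foresee is interpretive rather than computational. If one reads the dichotomy per index $j$ (allowing different alternatives for different $j$ at the same point $x$), then the identity $\det D(x)=a(x)\det M(x)$ from the proof of Lemma \ref{l7}, combined with $\det M(x)>0$, only shows that $\det D$ and $a$ share a sign; neither the defining equation $\nabla\rho=a\nabla\|G\|^{2}+\sum\alpha_{k}\Omega_{k}$ nor Proposition \ref{f} seems to pin down that sign in mixed configurations. Under the uniform-in-$j$ reading adopted above, the proof collapses to the single cofactor expansion described, with the only technical ingredient being the positive-definiteness of the Gram matrix of the $\Omega_{j}$'s.
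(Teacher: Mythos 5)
Your proof is correct and follows essentially the same route as the paper: read the hypothesis uniformly in $j$, expand $\det D(x)$ along its first row (resp.\ first column), use Proposition \ref{f} for the positivity of $\langle\nabla\rho,\nabla\|G\|^{2}\rangle$ and the linear independence of $\Omega_{2},\dots,\Omega_{p}$ (valid since $x\notin V_{G}\cup\Sing G$) for the positivity of their Gram determinant, then conclude via the matricial criterion. Your interpretive caveat matches the paper's own reading, since its proof likewise assumes the chosen alternative holds for all $j=2,\dots,p$ at once.
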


\begin{proof}
	In fact, assume that $ \left\langle \nabla \|G(x)\|^2, \Omega_{j} (x) \right\rangle = 0$ for $2\leq j \leq p.$  It follows from Proposition \ref{f}  that  $\det D(x)= \langle \nabla \rho(x),  \nabla (\|G(x)\|^{2}) \rangle \cdot \det \left ( \langle  \Omega_{i}(x) , \Omega_{j}(x) \rangle_{i,j} \right )>0$.  The case  $\left\langle \nabla \rho, \Omega_{j} (x) \right\rangle = 0$ for $2\leq j \leq p$, follows from the same argument.
\end{proof}

\subsection{Other criteria for the existence of MVF}

Let $G: (\mathbb{R}^{m},0) \to (\mathbb{R}^{p},0)$ be a  analytic map germ  with $G(x)=(G_{1}(x),\ldots, G_{p}(x))$. One can write 

\begin{equation}\label{eq21}
\left\lbrace \begin{array}{cc}
G_1(x)&=G^{1}_{m_{1}}(x)+G^{1}_{m_{1}+1}(x)+ \cdots\\
&\vdots\\
G_p(x)&=G^{p}_{m_{p}}(x)+G^{p}_{m_{p}+1}(x)+ \cdots
\end{array}\right. 
\end{equation} 
where $ G^{i}_{m_{j}} $ is the homogeneous term of degree $m_{j},$ for $ i,j=1,\ldots, p.$ 
We say that $ G $ \textbf{has same multiplicity} if $ m_1 = m_2= \cdots = m_p $.

Our main result is the following.

\begin{theorem}\label{spt1}
	Let $G:=(G_1,\ldots,G_p): (\mathbb{R}^m,0) \to (\mathbb{R}^p,0)$ be an analytic map germ. If $G$ satisfies some of conditions below:
	\begin{enumerate}
		\item[(i)] $G$ has same multiplicity.
		\item[(ii)] $\left\langle \nabla G_i(x), \nabla G_j(x) \right\rangle =0$, for any $i,j=1,\ldots,p$ with $i \neq j$ and $x\in M(G) \m (V_G\cup \Sing G)$.
	\end{enumerate}
	Then  for any $x\in M(G) \m (V_{G}\cup \Sing G)$ the coefficient $a(x)$ is positive. In particular, if $\Disc G =\{0\}$ then there exists a MVF for $ G $ and  $\Psi_G$ is  $\rho$-regular.
\end{theorem}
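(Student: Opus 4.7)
The plan is to apply the matricial criterion Theorem~\ref{eqt2} by deriving a closed formula for $a(x)$ and checking its positivity via an asymptotic dominance argument. At any $x\in M(G)\setminus\Sing G$ the gradients $\nabla G_1,\dots,\nabla G_p$ are linearly independent, so one may write uniquely $\nabla\rho(x)=\sum_{i=1}^p\lambda_i(x)\nabla G_i(x)$. Matching this with $\nabla\rho=a\nabla\|G\|^2+\sum_{k\ge 2}\alpha_k\Omega_k$, substituting $\nabla\|G\|^2=2\sum_iG_i\nabla G_i$, and eliminating the $\alpha_k$'s by comparing coefficients of each $\nabla G_j$ yields
\[
a(x)=\frac{\sum_{i=1}^p\lambda_i(x)\,G_i(x)}{2\|G(x)\|^2}.
\]
So in both cases, the task reduces to showing $\sum_i\lambda_iG_i>0$ on $M(G)\setminus(V_G\cup\Sing G)$ near the origin.

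For case (i), I would take the inner product of $\nabla\rho=\sum\lambda_i\nabla G_i$ with $x$ and apply Euler's identity $\langle x,\nabla G_i^k\rangle=kG_i^k$ on each homogeneous piece. Since all $G_i$ have the same multiplicity $m$, this gives $\langle x,\nabla G_i\rangle=m\,G_i+R_i$ where $R_i:=\sum_{k>m}(k-m)G_i^k$ vanishes to order at least $m+1$, and hence
\[
\sum_i\lambda_iG_i=\frac{1}{m}\bigl(\langle\nabla\rho,x\rangle-\sum_i\lambda_iR_i\bigr).
\]
As $\langle\nabla\rho,x\rangle>0$ for $x\ne 0$, positivity of $\sum_i\lambda_iG_i$ will follow once the remainder is dominated. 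For case (ii), the orthogonality at $x$ immediately gives $\lambda_j=\langle\nabla\rho,\nabla G_j\rangle/\|\nabla G_j\|^2$, and so
\[
a(x)=\frac{1}{2\|G(x)\|^2}\sum_i\frac{G_i\,\langle\nabla\rho,\nabla G_i\rangle}{\|\nabla G_i\|^2};
\]
each summand equals $m_iG_i^2/\|\nabla G_i\|^2$ (times the positive radial factor coming from $\nabla\rho$) plus a correction of strictly higher order in $\|x\|$.

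To render the asymptotic dominance rigorous I would argue by contradiction using the Curve Selection Lemma. If $a(x)\le 0$ holds on points of $M(G)\setminus(V_G\cup\Sing G)$ arbitrarily close to $0$, then one extracts a real analytic arc $\phi\colon[0,\varepsilon)\to\mathbb{R}^m$ with $\phi(0)=0$, $\phi(t)\in M(G)\setminus(V_G\cup\Sing G)$ for $t>0$, and $a(\phi(t))\le 0$. Puiseux-expanding $\phi(t)=t^q\phi_0+O(t^{q+1})$ with $\phi_0\ne 0$, the order estimates $\|\phi\|\sim t^q$, $\|\nabla G_i(\phi)\|\sim t^{q(m-1)}$, and a Cramer-type solution of the linear system yield $|\lambda_i(\phi(t))|=O(t^{q(2-m)})$ in case (i), while $|R_i(\phi(t))|=O(t^{q(m+1)})$; thus $|\sum\lambda_iR_i|=O(t^{3q})$ is of strictly higher order than $|\langle\nabla\rho,\phi\rangle|$, forcing $\sum\lambda_iG_i>0$ along $\phi$ for small $t>0$, a contradiction. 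The argument for case (ii) runs in parallel: the leading Puiseux coefficient of each $G_i\langle x,\nabla G_i\rangle$ is $m_i(G_i^{m_i}(\phi_0))^2\ge 0$, with at least one index yielding a strictly positive value since $\phi\not\subset V_G$. The final conclusions---existence of an MVF for $G$ and $\rho$-regularity of $\Psi_G$ when $\Disc G=\{0\}$---then follow from Theorems~\ref{eqt2} and~\ref{sf2}.

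The main obstacle is the Cramer-type bound $|\lambda_i|=O(t^{q(2-m)})$ along $\phi$: this presumes the Gram matrix $(\langle\nabla G_i,\nabla G_j\rangle)_{ij}$ is non-degenerate of the expected order along $\phi$, which may fail at sub-Puiseux strata---notably when all $G_i^m(\phi_0)=0$, so $\phi$ is tangent to the zero set of the initial forms. Handling these sub-cases requires iterating the Puiseux expansion to capture the true leading orders of the $G_i$; the hypothesis $\phi\not\subset V_G$ always guarantees that some $G_i(\phi(t))$ has a nonzero leading Puiseux term, which ultimately carries the dominance argument through.
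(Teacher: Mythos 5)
Your reduction coincides with the paper's: the identity $a(x)=\langle\lambda(x),G(x)\rangle/\bigl(2\|G(x)\|^{2}\bigr)$, where $\nabla\rho=\sum_{i}\lambda_{i}\nabla G_{i}$, is exactly Lemma \ref{L1} (proved there by Cramer's rule on $\{G_{1}\neq0\}$), and your use of Euler's identity on the homogeneous pieces is exactly Lemma \ref{L2}, which isolates the leading term $\langle G(x)\cdot[\mathcal{D}(m_{i})A(x)],G(x)\rangle$ with $A(x)=[\mathrm{J}G\,\mathrm{J}G^{t}]^{-1}$ plus higher-order terms. Where you diverge is the finishing step: the paper concludes positivity of the leading term from positive definiteness of $A(x)$ ($\mathcal{D}=mI$ in case (i), $A$ diagonal in case (ii)) and then lets the origin do the rest, whereas you try to make the whole comparison quantitative along arcs via the Curve Selection Lemma and Puiseux expansions.

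That is precisely where your proof has a genuine gap, and it is the one you flag yourself. The bound $|\lambda_{i}(\phi(t))|=O(t^{q(2-m)})$ comes from inverting the Gram matrix $\bigl(\langle\nabla G_{i},\nabla G_{j}\rangle\bigr)_{ij}$ along the arc, and that inverse is not controlled by the naive orders $\|\nabla G_{i}(\phi(t))\|\sim t^{q(m-1)}$: along arcs on which the initial forms degenerate (a common zero direction of the initial forms, asymptotically collinear gradients, or simply $\|\nabla G_{i}(\phi(t))\|$ vanishing to higher order than expected), $\det(\mathrm{J}G\,\mathrm{J}G^{t})$ vanishes to higher order and the $\lambda_{i}$ can have arbitrarily negative $t$-order relative to your estimate; then $\sum_{i}\lambda_{i}R_{i}$ is no longer $o\bigl(\langle\nabla\rho,\phi\rangle\bigr)$ and the dominance collapses. ``Iterating the Puiseux expansion'' is a gesture, not an argument: nothing in the hypotheses bounds how the orders of $G_{i}(\phi(t))$, of $\|\nabla G_{i}(\phi(t))\|$ and of the small eigenvalues of the Gram matrix interact, so the case analysis you defer is open-ended and you never produce the inequality that would close it. The same problem infects your case (ii): $R_{i}$ is of higher order than $\|x\|^{m_{i}}$ but not than $|G_{i}(x)|$ (which may vanish to much higher order), so $G_{i}R_{i}/\|\nabla G_{i}\|^{2}$ need not be dominated termwise by $m_{i}G_{i}^{2}/\|\nabla G_{i}\|^{2}$. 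As written, then, the argument is incomplete exactly at the step that carries all the content; to repair it you would either have to prove an actual dominance statement (e.g.\ a \L ojasiewicz-type inequality along $M(G)\m(V_{G}\cup\Sing G)$ controlling the Gram inverse), or follow the paper's route of first extracting the positive-definite quadratic form $\langle G\,\mathcal{D}(m_{i})A(x),G\rangle$ via Lemma \ref{L2} and only then treating the remainder, rather than estimating the $\lambda_{i}$ individually.
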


For the proof of Theorem \ref{spt1}, one needs the matricial criterion and the Lemmas \ref{L1} e \ref{L2} stated below. 

\begin{example}\label{ex-nc} Let $G(x,y,z)=(xy,xz)$.  One can show that $G$ satisfies the condition \eqref{eq:main} and $\Disc G = \{0\}$. Since $ G $ has same multiplicity, for any $x \in M(G)\m V_G$ the coefficient $a(x)$ is positive. Therefore, there exists a MVF for $ G $ and $ \Psi_G $ is $ \rho $-regular. Thus, $G$ has the tube and  sphere fibrations and they are equivalent.
\end{example}

\begin{lemma}\label{L1} Let $G(x)=(G_{1}(x),\ldots, G_{p}(x))$, be an analytic map germ.  For $x\in M(G) \m (V_{G}\cup \Sing G)$ one has
	\[
	\displaystyle{a(x)=\frac{\langle \alpha (x), G(x) \rangle }{\|G(x) \|^2}},
	\]
	where $\alpha(x)=(\alpha_{1}(x),\dots, \alpha_{p}(x)) \in {\bR}^{p}$ is such that $\nabla \rho (x) = \Sigma_{k=1}^p \alpha_{k}(x) \nabla G_{k}(x) $. In particular, $x \in M(\Psi_G) \m (V_G \cup \Sing G) $ if, and  only if $ \left \langle \alpha(x),G(x) \right \rangle=0$. 
\end{lemma}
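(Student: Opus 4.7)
The plan is to exploit the uniqueness of two different basis expansions of $\nabla\rho(x)$ within the same normal space. Since $x\notin \Sing G$, the gradients $\nabla G_1(x),\ldots,\nabla G_p(x)$ are linearly independent and span the $p$-dimensional normal space to the fiber $G^{-1}(G(x))$. Because $x\in M(G)$, the failure of transversality between $\rho$ and $G$ at $x$ forces $\nabla\rho(x)$ to lie in this normal space, so the decomposition $\nabla\rho(x)=\sum_{k=1}^p \alpha_k(x)\nabla G_k(x)$ exists and is uniquely determined by $x$.

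Next I would remark that the vectors $\nabla\|G\|^2 = 2\sum_{k=1}^p G_k\nabla G_k$ and $\Omega_k = G_1\nabla G_k - G_k\nabla G_1$ (for $k\ge 2$) all lie in that very same normal space, and on the open chart $\{G_1\neq 0\}$ adopted at the start of Section~3 they form an alternative basis for it. Equation \eqref{eq} is then nothing but the expansion of $\nabla\rho$ in this second basis, with $a(x)$ playing the role of the first coefficient.

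The proof of the formula therefore reduces to a change-of-basis bookkeeping: I would substitute the above expressions for $\nabla\|G\|^2$ and for each $\Omega_k$ into \eqref{eq} and identify the resulting coefficient of each $\nabla G_k$ with the $\alpha_k$ from the lemma hypothesis. This yields a small linear system in the unknowns $a(x)$ and the coefficients of the $\Omega_k$; eliminating the latter collapses the system to a single scalar identity relating $a(x)\,\|G(x)\|^2$ to $\langle\alpha(x),G(x)\rangle$, which isolates $a(x)$ in exactly the stated closed form.

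For the second assertion I would invoke the observation already made in Section~3 that, for points of $M(G)\setminus G^{-1}(\Disc G)$, the vanishing $a(x)=0$ characterizes membership in $M(\Psi_G)\setminus G^{-1}(\Disc G)$. Combined with the formula just derived, this gives directly that $x\in M(\Psi_G)\setminus(V_G\cup \Sing G)$ if and only if $\langle \alpha(x),G(x)\rangle = 0$. The only subtlety I foresee is notational---the symbol $\alpha_k$ is used in \eqref{eq} and in the lemma statement for coefficients with respect to two different bases---so one must keep these two sets of scalars cleanly separated throughout the bookkeeping. Beyond that, the argument is purely linear-algebraic and requires no further analytic input.
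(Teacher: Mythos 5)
Your proposal is correct, but it follows a genuinely different route from the paper. The paper deduces the formula from the matricial criterion of Lemma~\ref{l7}: on the chart $\{G_1\neq 0\}$ it factors $A(x)\cdot B(x)=L_1(x)L_2(x)L_3(x)$ and $M(x)=L_1(x)L_2(x)L_1(x)^{t}$, so that $a(x)=\det L_3(x)/\det L_1(x)$, and then computes $\det L_1(x)=G_1(x)^{p-2}\|G(x)\|^2$ and, by Laplace expansion along the first column, $\det L_3(x)=G_1(x)^{p-2}\sum_k \alpha_k(x)G_k(x)$. You instead never touch determinants: you expand $\nabla\rho$ twice in the normal space, once in the basis $\{\nabla G_k\}$ and once as in \eqref{eq}, substitute $\nabla\|G\|^2=2\sum_k G_k\nabla G_k$ and $\Omega_k=G_1\nabla G_k-G_k\nabla G_1$, match coefficients (legitimate since $x\notin\Sing G$ makes the $\nabla G_k$ independent), and then pair the resulting identities with $G(x)$, whereupon the $\Omega_k$-coefficients cancel and a single scalar identity isolates $a(x)$. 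This is more elementary and arguably more transparent than the paper's Cramer/Gram-matrix bookkeeping; it also makes visible a normalization the paper silently drops (your computation yields $\langle\alpha(x),G(x)\rangle=2\,a(x)\|G(x)\|^2$, the factor $2$ coming from $\nabla\|G\|^2$, which the paper absorbs by writing the first row of $L_1$ as $(G_1,\ldots,G_p)$), a discrepancy that is harmless since everything downstream only uses the sign of $a(x)$. Your handling of the second assertion coincides with the paper's: both invoke the Section~3 observation that, off $G^{-1}(\Disc G)$, the vanishing of $a(x)$ on $M(G)$ characterizes $M(\Psi_G)$. Your closing caution about the clash between the $\alpha_k$ of \eqref{eq} and the $\alpha_k$ of the lemma is well placed; keeping the two families of coefficients distinct is exactly what makes the elimination step clean.
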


\begin{proof}
	It follows from Proposition \ref{l7} that, for any $x\in M(G) \m (V_{G}\cup \Sing G)$ the coefficient 
	\[a(x)=\dfrac{\textrm{det}[A(x).B(x)]}{\textrm{det}M(x)} \]
	where the matrices $A(x)$ and $B(x)$ are as in the Remark \ref{spr1} and $ M(x)=A(x).A(x)^{t}.$
	
	\vspace{0.2cm}
	
	Since   $x\notin V_G\cup \Sing G$, one has from Remark \ref{spr1} that $ A(x)$ and $M(x)$ has maximal rank.   
	
	\vspace{0.2cm}
	
	On the open set $\{G_{1}(x)\neq 0\}$ one can rewrite:
	
	$$ A(x)= \left[  \begin{array}{cccc}
	G_{1}(x) & G_{2}(x) & \cdots & G_{p}(x)\\
	-G_2(x) & G_1(x)&\cdots &0 \\
	\vdots &\vdots&\ddots&\vdots\\
	-G_p(x)& 0& \cdots & G_1(x)
	\end{array} \right]\left[\begin{array}{c} 
	\nabla G_1(x) \\
	\nabla G_2(x)\\
	\vdots \\
	\nabla G_p(x)
	
	\end{array} \right] $$
	
	$$ B(x)^{t}= \left[  \begin{array}{cccc}
	\alpha_{1}(x) & \alpha_{2}(x) & \cdots & \alpha_{p}(x)\\
	-G_2(x) & G_1(x)&\cdots &0 \\
	\vdots &\vdots&\ddots&\vdots\\
	-G_p(x)& 0& \cdots & G_1(x)
	\end{array} \right]\left[\begin{array}{c} 
	\nabla G_1(x) \\
	\nabla G_2(x)\\
	\vdots \\
	\nabla G_p(x)
	
	\end{array} \right].\vspace{0.5cm} $$
	Therefore, one has that $ A(x)\cdot B(x)=L_1 (x)\cdot L_2 (x)\cdot  L_3 (x) $ and $ M(x) = L_1 (x)\cdot L_2 (x)\cdot L_1(x)^{t} $, where
	
	$$L_1 (x)= \left[  \begin{array}{cccc}
	G_{1}(x) & G_{2}(x) & \cdots & G_{p}(x)\\
	-G_2(x) & G_1(x)&\cdots &0 \\
	\vdots &\vdots&\ddots&\vdots\\
	-G_p(x)& 0& \cdots & G_1(x)
	\end{array} \right], \vspace{0.5cm} $$
	
	$$L_2(x)=\left[\begin{array}{c} 
	\nabla G_1(x) \\
	\nabla G_2(x)\\
	\vdots \\
	\nabla G_p(x)\end{array} \right] \left[\begin{array}{cccc} 
	\nabla G_1(x) & \nabla G_2(x)& \cdots & \nabla G_p(x)
	\end{array} \right]$$
	the Grassmann matrix of $JG(x),$ and
	
	$$L_3(x)= \left[  \begin{array}{cccc}
	\alpha_{1}(x) & -G_{2}(x) & \cdots & -G_{p}(x)\\
	\alpha_2(x) & G_1(x)&\cdots &0 \\
	\vdots &\vdots&\ddots&\vdots\\
	\alpha_p(x)& 0& \cdots & G_1(x)
	\end{array} \right]. \vspace{0.5cm}$$
	Thus, one has that 
	\[a(x) = \dfrac{\textrm{det} L_3 (x)}{\textrm{det} L_1 (x)}.\]
	
	On the other hand, on the open set $ \{G_1\neq 0\} $ one has that $ \textrm{det} L_1 (x) = (G_1(x))^{p-2}\|G(x)\|^2$. Analogously, applying Laplace's rule in the first column $ \textrm{det} L_3 (x) =(G_1(x))^{p-2} \Sigma_{k=1}^p \alpha_{k}(x)G_{k}(x) $. Therefore, 
	
	\[
	\displaystyle{a(x)=\frac{\Sigma_{k=1}^p \alpha_{k}(x)G_{k}(x)}{\| G(x) \|^2}=\frac{\langle \alpha(x), G(x) \rangle}{\| G(x) \|^2}}.
	\]
\end{proof}

\begin{lemma}\label{L2}
	Let $G(x)=(G_{1}(x),\ldots, G_{p}(x))$ be an analytic map germ.  In the set $M(G) \m (V_{G}\cup \Sing G)$, one has that
	\begin{equation}\label{caseg}
	\left \langle \alpha(x),G(x) \right \rangle = \left \langle (G(x)\cdot[\mathcal{D}(m_{i})\cdot A(x)],G(x) \right \rangle,
	\end{equation}
	where, $ A(x) = [\textrm{J}G(x)\cdot \textrm{J}G(x)^{t}]^{-1} $ and 
	$$ \mathcal{D}(m_{i})= \left[ \begin{array}{cccc}
	m_{1}  & 0 & \cdots & 0 \\
	0 & m_{2} & \cdots &  0  \\
	\vdots & \vdots & \ddots & \vdots \\
	0 & 0  & \cdots &m_{p}
	\end{array} \right] .$$ 
\end{lemma}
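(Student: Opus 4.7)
The plan is to invert the linear system defining $\alpha(x)$ using the Jacobian $JG(x)$ and its Gram matrix, and then to recognize the resulting scalar as the claimed quadratic form by applying an Euler-type identity on the leading homogeneous parts of the $G_i$.

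First I would rewrite $\nabla \rho(x) = \sum_{k=1}^{p}\alpha_k(x)\nabla G_k(x)$ as the row-vector identity $\nabla \rho(x) = \alpha(x)\cdot JG(x)$, where $JG(x)$ is the matrix whose $k$-th row is $\nabla G_k(x)$. Since $x\notin \Sing G$, the rows of $JG(x)$ are linearly independent, so $JG(x)\cdot JG(x)^{t}$ is invertible and $A(x)$ is well defined (this is already exploited in Lemma~\ref{l7} and Lemma~\ref{L1}). Multiplying both sides on the right by $JG(x)^{t}\cdot A(x)$ yields $\alpha(x) = \nabla \rho(x)\cdot JG(x)^{t}\cdot A(x)$, and pairing with $G(x)$ produces $\langle \alpha(x),G(x)\rangle = \nabla \rho(x)\cdot JG(x)^{t}\cdot A(x)\cdot G(x)^{t}$.

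The next step is to compute the row vector $\nabla \rho(x)\cdot JG(x)^{t}$, whose $i$-th entry is $\langle \nabla \rho(x),\nabla G_i(x)\rangle$. Using the homogeneous decomposition \eqref{eq21} together with Euler's identity applied degree by degree to the leading term of $G_i$, and recalling that $\nabla \rho_E(x)$ is radial, I would identify $\langle \nabla \rho(x),\nabla G_i(x)\rangle$ with $m_i G_i(x)$. This gives $\nabla \rho(x)\cdot JG(x)^{t} = G(x)\cdot \mathcal{D}(m_i)$ as row vectors, so that substitution into the previous display produces
\[
\langle \alpha(x),G(x)\rangle \;=\; G(x)\cdot \mathcal{D}(m_i)\cdot A(x)\cdot G(x)^{t} \;=\; \langle G(x)\cdot [\mathcal{D}(m_i)\cdot A(x)],\, G(x)\rangle,
\]
which is exactly \eqref{caseg}.

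The main obstacle is the Euler step: the identity $\langle x,\nabla G_i(x)\rangle = m_i G_i(x)$ is exact only when $G_i$ is homogeneous of degree $m_i$, so the higher-order terms in \eqref{eq21} contribute an error of strictly greater degree that has to be absorbed. I would handle this by exploiting that the statement is made as a germ at the origin, where the leading multiplicity dictates the dominant contribution, together with the fact that $x$ lies in $M(G)$ (so $\nabla \rho$ already lies in the span of the $\nabla G_k$). Once \eqref{caseg} is in place, its intended use in Theorem~\ref{spt1} is immediate: in case~(i) the matrix $\mathcal{D}(m_i)$ is a positive scalar multiple of the identity and the Gram-inverse $A(x)$ is positive definite off $\Sing G$, forcing $\langle \alpha(x),G(x)\rangle > 0$; in case~(ii) the orthogonality hypothesis makes $JG\cdot JG^{t}$ diagonal, and the right-hand side reduces to a sum of manifestly positive terms.
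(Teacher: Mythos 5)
Your route is the same as the paper's: write $\nabla \rho(x)=\alpha(x)\cdot \mathrm{J}G(x)$, multiply on the right by $\mathrm{J}G(x)^{t}$, invert the Gram matrix (legitimate since $x\notin \Sing G\cup V_G$) to get $\alpha(x)=\nabla\rho(x)\cdot \mathrm{J}G(x)^{t}\cdot A(x)$, and then evaluate the entries $\langle \nabla\rho(x),\nabla G_i(x)\rangle$ via Euler's identity applied to the expansion \eqref{eq21}. The gap is precisely in the Euler step. The identity $\langle x,\nabla G_i(x)\rangle=m_iG_i(x)$ is exact only when $G_i$ is homogeneous; in general Euler's identity applied degree by degree gives $\langle x,\nabla G_i(x)\rangle=m_iG_i(x)+\sum_{j\geq 1}jG^{i}_{m_i+j}(x)$, so that $[x]\cdot \mathrm{J}G(x)^{t}=G(x)\cdot\mathcal{D}(m_i)+V(x)$ with $V(x)=\bigl(\sum_{j\geq1}jG^{1}_{m_1+j}(x),\ldots,\sum_{j\geq1}jG^{p}_{m_p+j}(x)\bigr)$. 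Your proposal to ``absorb'' this error using the germ/dominant-term viewpoint, or the fact that $x\in M(G)$, does not work: membership in $M(G)$ only furnishes the existence of $\alpha(x)$ and produces no cancellation, and a dominance argument can control the \emph{sign} of a quantity near the origin but cannot upgrade an order-of-vanishing comparison to the exact identity \eqref{caseg}.

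What the paper actually does is carry the error explicitly: its proof concludes $\langle\alpha(x),G(x)\rangle=\langle G(x)\cdot[\mathcal{D}(m_i)\cdot A(x)],G(x)\rangle+\mathrm{H.O.T.}$, with $\mathrm{H.O.T.}=\langle V(x)\cdot A(x),G(x)\rangle$, and this remainder is only disposed of later, in the proof of Theorem \ref{spt1}, where one argues that the leading term dominates for $x$ close enough to the origin and only positivity of $\langle\alpha(x),G(x)\rangle$ is needed. (In fairness, the displayed statement \eqref{caseg} itself omits the higher-order term; it should be read modulo such terms, since as an exact identity it already fails for a single nonhomogeneous $G_1$ -- which is exactly why your attempt to prove it exactly cannot succeed.) Two minor points: the paper first replaces $\alpha(x)$ by $\alpha(x)/2$ so that the radial vector is literally $x$; your identification $\langle\nabla\rho,\nabla G_i\rangle=m_iG_i$ silently drops this positive normalization, harmless for sign questions but worth stating. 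Your closing remarks about how the lemma feeds into Theorem \ref{spt1} (positive definiteness of $A(x)$, diagonality under the orthogonality hypothesis) agree with the paper.
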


\begin{proof} Since $x\in M(G)\m (V_{G} \cup \Sing G)$, by definition there exist $ \alpha_1(x), \ldots, \alpha_p (x) \in \mathbb{R} $ such that
	\[\nabla \rho (x) =\Sigma_{j=1}^{p}\alpha_j (x)\nabla G_j (x).\]
	Let us define $ \alpha(x): = (\alpha_1(x),\ldots, \alpha_p(x)) $. After changing $\alpha(x)$ by $\displaystyle{\frac{\alpha(x)}{2}},$ one can consider the matrix equation 
	$[x]= [\alpha(x)]\cdot [\textrm{J}G(x)]. \medskip$
	Now, multiplying both side of the matrix equation  above  by $ [\textrm{J}G(x)^{t}] $ one gets:
	\begin{equation}\label{eq20}
	[x]\cdot [\textrm{J}G(x)^{t}]= [\alpha(x)]\cdot [\textrm{J}G(x)\cdot \textrm{J}G(x)^{t}] \medskip.
	\end{equation}
	The $ p \times p $ square matrix $[\textrm{J}G(x)\cdot \textrm{J}G(x)^{t}] $ is invertible under the condition $x\notin \Sing (G)\cup V_{G}$. So, after multiplying both side of equality (\ref{eq20}) by its inverse $A(x)=[\textrm{J}G(x)\cdot \textrm{J}G(x)^{t}]^{-1}$, one has the following equation
	$[\alpha(x)]=[x]\cdot [\textrm{J}G(x)^{t}]\cdot A(x).$
	Now, one can write the scalar product (dot product) as 
	\begin{equation}\label{eq22}
	\left \langle \alpha(x),G(x) \right \rangle = \left \langle [x]\cdot [\textrm{J}G(x)^{t}]\cdot A(x),G(x) \right \rangle.
	\end{equation}
	
	It follows from equations in \eqref{eq21} that:
	\[\nabla G_1(x) = \nabla G^{1}_{m_{1}}(x)+\nabla G^{1}_{m_{1}+1}(x)+\cdots  \] 
	\centerline{\vdots}
	\[\nabla G_p(x) = \nabla G^{p}_{m_{p}}(x)+\nabla G^{p}_{m_{p}+1}(x)+\cdots. \]
	Then  by Euler's identity one finds
	$$ \left\langle \nabla G^{i}_{k}(x), x \right\rangle =kG^{i}_{k}(x).$$
	Hence, 
	\[\left\langle \nabla G_i(x), x\right\rangle  = \sum_{j=0}^{\infty}\left\langle \nabla G_{m_{i}+j}^{i}(x),x\right\rangle = \sum_{j=0}^{\infty}(m_{i}+j)G^{i}_{m_{i}+j}(x) = m_{i}G_{i}(x)+ \sum_{j=1}^{\infty}jG^{i}_{m_{i}+j}(x) \] and one can decompose
	\begin{equation} \label{eq-40}
	(\left \langle \nabla G_{1}(x), x \right \rangle, \ldots,  \left \langle \nabla G_{p}(x), x \right \rangle)= 
	(m_{1}G_{1}(x),\ldots, m_{p}G_{p}(x))+\left(\sum_{j=1}^{\infty}jG^{1}_{m_{1}+j}(x), \ldots, \sum_{j=1}^{\infty}jG^{p}_{m_{p}+j}(x)\right).
	\end{equation}
	Denote the vector expression $(\sum_{j=1}^{\infty}jG^{1}_{m_{1}+j}(x), \ldots, \sum_{j=1}^{\infty}jG^{p}_{m_{p}+j}(x))$ by $V(x),$ and denote $\langle V(x)\cdot A(x), G(x) \rangle$ by $H.O.T.$
	
	From equation (\ref{eq22}) one gets $$\left \langle \alpha(x),G(x) \right \rangle = \left \langle (m_{1}G_{1}(x),\ldots, m_{p}G_{p}(x))\cdot A(x),G(x) \right \rangle + H.O.T.$$
	One can now write the vector $(m_{1}G_{1}(x),\ldots, m_{p}G_{p}(x))$ as a matrix product $G(x)\cdot \mathcal{D}(m_{i})$ where $ \mathcal{D}(m_{i})$ is a diagonal matrix with entries $m_{i}$, $1\leq i \leq p$. 	
	Hence, 
	\begin{equation*}
	\left \langle \alpha(x),G(x) \right \rangle = \left \langle (G(x).[\mathcal{D}(m_{i})\cdot A(x)],G(x) \right \rangle +H.O.T.
	\end{equation*} \end{proof}

\begin{proof}[{\bf Proof of Theorem \ref{spt1}}]
	\textbf{item $(i).$}	Since $ G $ has same multiplicity, namely $ m=m_1=\cdots=m_p>0 $ the matrix $ \mathcal{D}(m_i)=m.I_{p\times p} $ where the matrix $ I_{p\times p} $ is the $ p\times p $ identity matrix. Thus,
	$$\left \langle \alpha(x),G(x) \right \rangle = m\left \langle (G(x)\cdot [A(x)],G(x) \right \rangle +H.O.T.$$
	Now, since the symmetric matrix $[\textrm{J}G(x)\cdot \textrm{J}G(x)^{t}] $ is a Gram matrix  and $ \{\nabla G_1(x),\ldots, \nabla G_p (x) \} $ are linearly independent, one has  that $[\textrm{J}G(x) \cdot \textrm{J}G(x)^{t}]$ is positive definite. 
	Consequently,  the inverse matrix $A(x)  $ is positive definite 
	and $m\left \langle (G(x)\cdot A(x),G(x) \right \rangle > 0$. 	Therefore, for any $x\in M(G)\m (V_{G}\cup \Sing G)$ close enough to the origin the scalar product $\left \langle \alpha(x),G(x) \right \rangle$ must be positive.
	
\vspace{0.2cm}
	
	\noindent
	\textbf{item $(ii).$}  Since   $\left\langle \nabla G_i(x), \nabla G_j(x) \right\rangle =0$, for any $i,j=1,\ldots,p$ with $i \neq j$, one has that the positive definite matrix $A(x)^{-1}=[\textrm{J}G(x) \cdot \textrm{J}G(x)^{t}]$ is diagonal. Consequently, the matrix $ \mathcal{D}(m_{i})\cdot A(x)$ must be positive definite and the result follows.
\end{proof}






\section{Applications} 

In this section we show that for some classes of maps there exist the  fibrations \eqref{ssf} and \eqref{sfibration1} and  they are equivalent. 

\begin{definition}\cite{Ma}\label{slm}
	Let $G:=(G_1,\ldots,G_p):(\mathbb{R}^m,0) \to (\mathbb{R}^p,0) $ be an analytic map germ. We say that $G$ is a \textit{Simple \L-Map}, if $\left\langle \nabla G_i, \nabla G_j\right\rangle =0$ for any $i,j=1,\ldots,p$ with $i\neq j$ and $\|\nabla G_i\|=\|\nabla G_j\|$ for any $i,j=1,\ldots,p$. 
\end{definition}

\begin{example} Consider the real map germ $ G:=(G_1,G_2):(\mathbb{R}^8,0) \to (\mathbb{R}^2,0) $ given by 
	$$ \left\lbrace \begin{array}{ccl}
	G_1(x,y,z,w,a,b,c,d)&= -w^2x^2+w^2y^2+4wxyz+x^2z^2-y^2z^2+ac+bd&   \medskip\\
	G_2(x,y,z,w,a,b,c,d)&=-2w^2xy-2wx^2z+2wy^2z+2xyz^2-ad+bc.&  \medskip\\
	\end{array}\right. $$
	
	One can show that $\left\langle \nabla G_1, \nabla G_2 \right\rangle =0$ and $	\|\nabla G_1 \|^2 =  \|\nabla G_2 \|^2$. Thus, $ G $ is a Simple \L-Map.  
\end{example}	

\begin{proposition}\label{c-slm}
	If  $G: (\mathbb{R}^m,0) \to (\mathbb{R}^p,0)$ be an analytic map germ which is a Simple \L - Map.  	Then there exist the fibrations \eqref{ssf} and \eqref{sfibration1} and they are equivalent.
\end{proposition}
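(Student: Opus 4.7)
The plan is to assemble the three ingredients demanded by Theorem \ref{tm}: the Milnor--Hamm tube fibration \eqref{sfibration1}, the Milnor--Hamm sphere fibration \eqref{ssf}, and a Milnor vector field. Once all three are in place, Theorem \ref{tm} closes the argument and delivers the desired equivalence.

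First, I analyze the singular structure. The Simple \L-Map hypothesis gives the matricial identity $JG(x)\cdot JG(x)^{t}=\|\nabla G_{1}(x)\|^{2}\,I_{p\times p}$, so the Jacobian drops rank precisely when $\nabla G_{1}(x)=0$, which forces $\nabla G_{i}(x)=0$ for all $i$. Thus $\Sing G$ is contained in the common zero locus of the gradients; by analyticity, each $G_{i}$ is locally constant on $\Sing G$, and since $0\in\Sing G$ with $G(0)=0$, I deduce $\Sing G\subseteq V_{G}$ at the germ level. Consequently $\Disc G=\{0\}$, which is trivially radial, and the map germ is nice.

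Next, I apply Theorem \ref{spt1}(ii). The orthogonality $\langle\nabla G_{i},\nabla G_{j}\rangle=0$ for $i\neq j$ is precisely its hypothesis, so $a(x)>0$ on $M(G)\setminus(V_{G}\cup\Sing G)$. By Theorem \ref{eqt2} a MVF exists on $B^{m}_{\e}\setminus V_{G}$, and equivalently $\Psi_{G}$ is $\rho$-regular. A short direct computation (using orthogonality and equal norms) also yields $\langle\Omega_{j}(x),\nabla\|G(x)\|^{2}\rangle=0$ for each $j=2,\dots,p$, showing that Corollary \ref{f1} applies as well --- so the conclusion $a(x)>0$ is reinforced.

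Then I verify condition \eqref{eq:main}, which in the present case reduces to $\overline{M(G)\setminus V_{G}}\cap V_{G}\subseteq\{0\}$. This is the step I expect to be the main obstacle, since the earlier applications of the paper's criteria are essentially automatic but the tangential condition at $V_{G}$ demands a separate estimate. I would proceed by expressing points $x\in M(G)\setminus V_{G}$ in the orthogonal frame $\{\nabla G_{i}(x)\}$ as $x=\sum\alpha_{i}(x)\nabla G_{i}(x)$, from which the equal-norm condition gives a clean relation between $\|x\|^{2}$ and $\|G(x)\|^{2}$ along curves accumulating on $V_{G}$; a curve-selection-style argument (\emph{a la} the classical approach in \cite{Ma}) then rules out accumulation on $V_{G}\setminus\{0\}$. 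With \eqref{eq:main} verified, Proposition \ref{t:tube} and Corollary \ref{tube} deliver the tube fibration \eqref{sfibration1}, Theorem \ref{sf2} delivers the sphere fibration \eqref{ssf}, and Theorem \ref{tm} concludes the proof.
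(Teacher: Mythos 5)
Your overall architecture is the same as the paper's: deduce $\Disc G=\{0\}$ and condition \eqref{eq:main} from the Simple \L-Map structure, get $a(x)>0$ via Theorem \ref{spt1}(ii) (the orthogonality hypothesis), hence a MVF and $\rho$-regularity, then invoke Proposition \ref{t:tube}/Corollary \ref{tube}, Theorem \ref{sf2} and Theorem \ref{tm}. The difference is that the paper disposes of the crucial input in one stroke by citing \cite[Theorem 5.7]{Ma}, which states precisely that a Simple \L-Map satisfies \eqref{eq:main} and has $\Disc G=\{0\}$, whereas you attempt to establish these facts yourself --- and this is where your argument has a genuine gap. Your verification of \eqref{eq:main} is only a plan: the claimed ``clean relation between $\|x\|^{2}$ and $\|G(x)\|^{2}$'' coming from $x=\sum\alpha_i(x)\nabla G_i(x)$ with orthogonal equal-norm gradients is never exhibited (what one gets directly is $\|x\|^{2}=\|\alpha(x)\|^{2}\,\|\nabla G_1(x)\|^{2}$, which involves the gradient norm, not $\|G(x)\|$), and a bare curve-selection argument cannot suffice, since \eqref{eq:main} fails for general real analytic maps; the known proof for Simple \L-Maps goes through the strong \L ojasiewicz inequality machinery that is the actual content of \cite{Ma}. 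As it stands, the step you yourself flag as ``the main obstacle'' is exactly the step that is missing.

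Two smaller points. First, the inference ``$\Sing G\subseteq V_G$, consequently $\Disc G=\{0\}$'' conflates $G(\Sing G)$ with $\Disc G$: in the framework of this paper the discriminant is the locus of atypical values of a nice map germ and may strictly contain $G(\Sing G)$, so both niceness and the absence of nonzero atypical values require justification (in the paper they again come from \cite{Ma}, cf.\ the remark after Theorem \ref{sf2}). Second, your auxiliary observations are fine but redundant: the identity $JG(x)\cdot JG(x)^{t}=\|\nabla G_1(x)\|^{2}I_{p\times p}$, the computation $\langle \Omega_j(x),\nabla\|G(x)\|^{2}\rangle=2G_1(x)G_j(x)\left(\|\nabla G_j(x)\|^{2}-\|\nabla G_1(x)\|^{2}\right)=0$, and the appeal to Corollary \ref{f1} are all correct, but they only reprove what Theorem \ref{spt1}(ii) already gives. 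To repair the proof, either cite \cite[Theorem 5.7]{Ma} for \eqref{eq:main}, $\Disc G=\{0\}$ and niceness, or reproduce the \L ojasiewicz-inequality argument in full; the rest of your reasoning then goes through.
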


\begin{proof} 
If $ G $ is a Simple \L-Map then it satisfies the condition \eqref{eq:main} and $ \Disc G =\{0\} $ by  \cite[Theorem 5.7]{Ma}. Now by Theorem \ref{spt1} item ($ii$) the result follow. \end{proof}

\subsection{Mixed  functions and equivalence}

 Mixed singularities has been systematically studied by Mutsuo Oka in the sequence of papers \cite{Oka2, Oka3, Oka4} published since 2008, thenceforward the term \textit{mixed function} was coined and it has been used to identify function $f=(u+iv):\mathbb{C}^n\to \mathbb{C}$ with $f\left(\textbf{z}, \bar{\textbf{z}} \right)= \Sigma_{\nu, \mu}c_{\nu, \mu} \textbf{z}^{\nu}\bar{\textbf{z}}^{\mu}$, where $c_{\nu, \mu} \in \bC$,  $\textbf{z}^{\nu}$  and $\bar{\textbf{z}}^{\mu}$ are multi-monomials in the variables $z_j,\bar{z}_j$, with $j=1,\ldots,n$.

One may view $f$  as a real analytic map of $2n$ variables $(x,y)$ from $\bR^{2n}$ to $\bR^{2}$ by identifying  $\mathbb{C}^n$ with $\bR^{2n}$, $(z_{1},\ldots, z_{n}) = z\mapsto  (x,y)=(x_1,y_1,\ldots,x_n,y_n)$,  writing $\textbf{z=x+iy} \in \mathbb{C}^{n}$, where $z_{j}=x_{j}+iy_{j} \in \mathbb{C}$ with $x_j, y_j \in \bR$ for $j=1,...,n$. Moreover, one can define the \textit{holomorphic} and \textit{anti-holomorphic} gradients of $ f $ as follows:
\[\textrm{d} f:=\left(\frac{\partial f}{\partial z_{1}},...,\frac{\partial f}{\partial z_{n}} \right) \textrm{  and  } \bar{\textrm{d}}f:= \left(\frac{\partial f}{\partial  \bar{z}_1},...,\frac{\partial f}{\partial \bar{z}_{n}} \right).\]

\vspace{0.2cm}
where, 
\vspace{0.2cm}
\begin{equation*}
\frac{\partial f}{\partial z_j}=\frac{\partial u}{\partial z_j}+i \frac{\partial v}{\partial z_j} \textrm{   and   } \frac{\partial f}{\partial \bar{z}_j}=\frac{\partial u}{\partial \bar{z}_j}+i \frac{\partial v}{\partial \bar{z}_j}.
\end{equation*}

Consequently, one can consider the  identifications
$ \nabla u (x,y)=\overline{\textrm{d} f}(z,\bar{z})+\overline{\textrm{d}}f(z,\bar{z})$ and  $ \nabla v (x,y)=i\left(\overline{\textrm{d} f}(z,\bar{z})-\overline{\textrm{d}}f(z,\bar{z})\right)$. Since $\vec{u}$ and $\vec{v}$ are vectors in $\mathbb{C}^n$ we denote its hermitian product by $\left\langle\vec{u}, \vec{v} \right\rangle_\mathbb{C}$. With this notations and definitions, one has the following.

\begin{corollary}\label{c1}
	A  mixed function germ $f: (\mathbb{C}^m,0) \to (\mathbb{C},0) $ is a Simple \L - Map if and only if $\left\langle \overline{\dd f}, \bar{\dd}f \right\rangle _\mathbb{C}=0$, i.e., $\re\left\langle \overline{\dd f}, \bar{\dd}f \right\rangle _\mathbb{C}=\im\left\langle \overline{\dd f}, \bar{\dd}f \right\rangle _\mathbb{C}=0$. In particular, if $f$ is holomorphic then it is a Simple \L - Map and the fibrations \eqref{ssf} and \eqref{sfibration1} exist and they are equivalent
\end{corollary}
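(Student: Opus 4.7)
The plan is to reduce the Simple \L-Map conditions directly to the single complex equation $\langle \overline{\dd f}, \bar\dd f\rangle_{\mathbb{C}}=0$ by a straightforward computation with the hermitian product, using the identifications for $\nabla u$ and $\nabla v$ already recorded just above the corollary.

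First I would set the abbreviations $A := \overline{\dd f}$ and $B := \bar\dd f$, so that, viewed as vectors in $\mathbb{C}^m$ via the standard identification $\mathbb{R}^{2m}\cong \mathbb{C}^m$, we have
\[
\nabla u = A + B, \qquad \nabla v = i(A-B).
\]
Since under this identification the Euclidean inner product on $\mathbb{R}^{2m}$ equals $\re$ of the hermitian product on $\mathbb{C}^m$, the two Simple \L-Map conditions translate to
\[
\re \langle A+B,\, i(A-B)\rangle_{\mathbb{C}} = 0 \quad\text{and}\quad \langle A+B,A+B\rangle_{\mathbb{C}} = \langle i(A-B),\, i(A-B)\rangle_{\mathbb{C}}.
\]

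Next I would expand both sides using the sesquilinearity of $\langle\cdot,\cdot\rangle_{\mathbb{C}}$ (conjugate linear in the second slot). The norm computation gives
\[
\|\nabla u\|^2 = \|A\|^2 + \|B\|^2 + 2\re\langle A,B\rangle_{\mathbb{C}}, \qquad \|\nabla v\|^2 = \|A\|^2 + \|B\|^2 - 2\re\langle A,B\rangle_{\mathbb{C}},
\]
so $\|\nabla u\|=\|\nabla v\|$ if and only if $\re\langle A,B\rangle_{\mathbb{C}}=0$. For the orthogonality, one pulls the $i$ out of the second slot (picking up a $\bar i = -i$), expands $-i\langle A+B,A-B\rangle_{\mathbb{C}}$, and uses $\langle B,A\rangle_{\mathbb{C}} - \langle A,B\rangle_{\mathbb{C}} = -2i\,\im\langle A,B\rangle_{\mathbb{C}}$ to obtain
\[
\langle \nabla u,\nabla v\rangle_{\mathbb{R}^{2m}} = \re\langle A+B, i(A-B)\rangle_{\mathbb{C}} = -2\,\im\langle A,B\rangle_{\mathbb{C}}.
\]
Hence $\langle\nabla u,\nabla v\rangle=0$ if and only if $\im\langle A,B\rangle_{\mathbb{C}}=0$. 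Combining the two yields the equivalence
\[
f \text{ is a Simple \L-Map} \iff \re\langle \overline{\dd f},\bar\dd f\rangle_{\mathbb{C}}=\im\langle \overline{\dd f},\bar\dd f\rangle_{\mathbb{C}}=0 \iff \langle \overline{\dd f},\bar\dd f\rangle_{\mathbb{C}}=0,
\]
which is the first assertion.

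For the holomorphic case, $\bar\dd f \equiv 0$ by definition, so the condition $\langle \overline{\dd f}, \bar\dd f\rangle_{\mathbb{C}} = 0$ holds trivially; thus $f$ is a Simple \L-Map and Proposition \ref{c-slm} immediately gives the existence and equivalence of the two fibrations \eqref{ssf} and \eqref{sfibration1}. No genuine obstacle is expected: the only delicate point is keeping track of the conventions (which slot of $\langle\cdot,\cdot\rangle_{\mathbb{C}}$ is conjugate linear, and the sign of $i$ appearing in $\nabla v$), since a sign error would invert the roles of $\re$ and $\im$ in the intermediate identities even though the final conclusion---vanishing of the full complex product---would be unaffected.
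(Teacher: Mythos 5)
Your proposal is correct and follows exactly the route the paper intends: the paper states Corollary \ref{c1} as an immediate consequence of the identifications $\nabla u=\overline{\dd f}+\bar{\dd}f$, $\nabla v=i\bigl(\overline{\dd f}-\bar{\dd}f\bigr)$ together with Definition \ref{slm} and Proposition \ref{c-slm}, leaving the expansion of the hermitian products implicit, and your computation (yielding $\|\nabla u\|^{2}-\|\nabla v\|^{2}=4\re\langle\overline{\dd f},\bar{\dd}f\rangle_{\mathbb{C}}$ and $\langle\nabla u,\nabla v\rangle=-2\im\langle\overline{\dd f},\bar{\dd}f\rangle_{\mathbb{C}}$) is precisely that omitted verification. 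The holomorphic case via $\bar{\dd}f\equiv 0$ and Proposition \ref{c-slm} is also exactly the paper's intended argument.
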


In what follows we will consider an algorithm to construct  classes of mixed functions which are Simple \L-Maps, for short, {\it MSL}. 

\begin{proposition}[Algorithm to MSL]\label{alg}
	Consider the  following algorithm:
	\begin{itemize}
		\item [Step (1).] Fix a copy of $\mathbb{C}^n,$ $n\geq 2$, and a coordinate system $z=(z_{1},\ldots, z_ {n})$. 
		
		\item [Step (2).] For each $1\le k <n$ choose natural numbers $i_1,\ldots,i_k \in \{1,2,\ldots,n\}$ with $i_1<i_2<\ldots<i_k$ and fix the coordinates $(z_{i_{1}},\ldots,z_{i_{k}})$. For the complementary ordered list $q_{1}, \ldots, q_{n-k}\in \{1,2,\ldots,n\}\m\{i_1,\ldots,i_k \}$, $q_{1}< \ldots< q_{n-k}$, consider the reminding coordinates $(z_{q_{1}},\ldots,z_{q_{n-k}})$.
		
		\item [Step (3).] For any natural numbers $j$, $t$ and $p$, choose arbitrary holomorphic functions $f_j(z_{i_{1}},\ldots,z_{i_{k}})$, $r_t(z_{i_{1}},\ldots,z_{i_{k}})$, $g_j(z_{q_{1}},\ldots,z_{q_{n-k}})$ and $h_p(z_{q_{1}},\ldots,z_{q_{n-k}})$.
		\item [Step (4).] Define the mixed function germ $f:(\mathbb{C}^n,0)\to (\mathbb{C},0)$ by $f(z_1,\ldots,z_n)=$
		$$\sum_{\alpha=1}^{j}f_{\alpha}(z_{i_{1}},\ldots,z_{i_{k}})\overline{g_{\alpha}(z_{q_{1}},\ldots,z_{q_{n-k}})}+\sum_{\beta=1}^{t}r_\beta(z_{i_{1}},\ldots,z_{i_{k}})+\sum_{\gamma=1}^{p}\overline{h_\gamma (z_{q_{1}},\ldots,z_{q_{n-k}}) }.$$
	\end{itemize}
	\textbf{Claim:} The mixed function germ $f$ is MSL.
\end{proposition}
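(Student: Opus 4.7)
The plan is to apply Corollary \ref{c1}: it suffices to verify that $\left\langle \overline{\dd f}, \bar{\dd}f \right\rangle_{\mathbb{C}} = 0$. I will achieve this by exploiting the separation of variables built into the algorithm, namely that the index set $\{1,\ldots,n\}$ is partitioned into $I=\{i_{1},\ldots,i_{k}\}$ and $Q=\{q_{1},\ldots,q_{n-k}\}$, and that each summand of $f$ depends either only on $z_{I}$ (holomorphically) or only on $\bar{z}_{Q}$ (anti-holomorphically) or is a pure product of one of each.

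First, I would compute the Wirtinger derivatives component-wise. For an index $j\in Q$, the terms $f_{\alpha}(z_{I})$, $r_{\beta}(z_{I})$ do not involve $z_{j}$ at all, while $\overline{g_{\alpha}(z_{Q})}$ and $\overline{h_{\gamma}(z_{Q})}$ are anti-holomorphic, so $\partial f/\partial z_{j}=0$. Thus the holomorphic gradient $\dd f$ has all its (possibly) nonzero entries concentrated in the coordinates indexed by $I$. Symmetrically, for $j\in I$, both $f_{\alpha}(z_{I})$ and $r_{\beta}(z_{I})$ are holomorphic in $z_{I}$ (so contribute $0$ to $\partial f/\partial \bar{z}_{j}$), and the remaining summands depend only on $\bar{z}_{Q}$ (so again contribute $0$). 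Hence $\bar{\dd}f$ is supported only on the coordinates indexed by $Q$.

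Next, I would assemble the conclusion. The Hermitian inner product in $\mathbb{C}^{n}$ decomposes as a sum over $j=1,\ldots,n$ of products of the respective $j$-th components of $\overline{\dd f}$ and $\bar{\dd}f$. By the previous step, in each such product at least one factor vanishes: if $j\in I$ then the $j$-th component of $\bar{\dd}f$ is zero, and if $j\in Q$ then the $j$-th component of $\dd f$ (hence of $\overline{\dd f}$) is zero. Since $I\cap Q=\emptyset$, every term of the sum is zero, so $\left\langle \overline{\dd f},\bar{\dd}f\right\rangle_{\mathbb{C}}=0$. By Corollary \ref{c1}, $f$ is a Simple \L-Map, i.e.\ MSL.

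There is no real obstacle here; the only thing to be careful about is the bookkeeping of Wirtinger calculus, making sure that mixed products such as $f_{\alpha}(z_{I})\overline{g_{\alpha}(z_{Q})}$ are correctly differentiated (the $z_{I}$-variables see only the holomorphic factor, and the $\bar{z}_{Q}$-variables see only the anti-holomorphic factor, so no cross-terms arise). Once this is observed, the disjointness of $I$ and $Q$ does all the work.
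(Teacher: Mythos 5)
Your proof is correct and follows essentially the same route as the paper: compute the Wirtinger derivatives, observe that $\overline{\dd f}$ is supported only on the coordinates indexed by $I$ while $\bar{\dd}f$ is supported only on those indexed by $Q$, conclude $\left\langle \overline{\dd f}, \bar{\dd}f \right\rangle_{\mathbb{C}}=0$, and invoke Corollary \ref{c1}. The only difference is that you handle the general sums and arbitrary index sets directly, whereas the paper reduces ``by simplicity'' to the case $j=t=p=1$, $n=2k$, $i_{\eta}=\eta$, so your write-up is, if anything, slightly more complete.
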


\begin{proof}
	By simplicity we will choose $j=t=p=1$, $n=2k$ for some $k\in\mathbb{N}$ and $i_\eta = \eta$ for $1 \le\eta \le k$.
	In this case, 
	
	$$f(z_1,\ldots,z_n)=f_1(z_1,\ldots,z_k)\overline{g_1(z_{k+1},\ldots,z_n)}+r_1(z_1,\ldots,z_k)+\overline{h_1(z_{k+1},\ldots,z_n)}.$$
	We notice that $$\dfrac{\partial f}{\partial z_{\alpha}} = \dfrac{\partial f_1}{\partial z_{\alpha}}g_1+\dfrac{\partial r_1}{\partial z_{\alpha}} \textit{ and } \dfrac{\partial f}{\partial \bar{z}_{\alpha}}=0 $$  
	for   $\alpha = 1,\ldots, k$,
	
	$$\dfrac{\partial f}{\partial z_{\beta}}=0  \textit{  and  } \dfrac{\partial f}{\partial \bar{z}_{\beta}} = \dfrac{\partial g_1}{\partial \bar{z}_{\beta}}f_1+\dfrac{\partial h_1}{\partial \bar{z}_{\beta}}$$ 
	for $\beta = k+1,\ldots,n$. Thus, one has $$\overline{\dd f}=\left(\dfrac{\partial f_1}{\partial z_{1}}g_1+\dfrac{\partial r_1}{\partial z_{1}},\ldots, \dfrac{\partial f_1}{\partial z_{k}}g_1+\dfrac{\partial r_1}{\partial z_{k}},0,\ldots,0\right) $$
	and $$\bar{\dd}f=\left(0,\ldots,0,\dfrac{\partial g_1}{\partial \bar{z}_{k+1}}f_1+\dfrac{\partial h_1}{\partial \bar{z}_{k+1}},\ldots,\dfrac{\partial g_1}{\partial \bar{z}_{n}}f_1+\dfrac{\partial h_1}{\partial \bar{z}_{n}}\right).$$
	Therefore, $\left\langle \overline{\dd f}, \bar{d}f \right\rangle _\mathbb{C}=0$ and by Corollary \ref{c1} the function $f$ is MSL.
\end{proof}

\begin{example} 
	Let $ G:(\mathbb{C}^4,0) \to (\mathbb{C},0) $ given by $ G(z_1,z_2,z_3,z_4)=z_{1}^{2}\bar{z}_{2}^{2}+z_3 \bar{z}_4 +z_{1}^{4}-z_{3} -\bar{z}_2\bar{z}_{4}^{3} $. Considering $f_1(z_1,z_3)=z_{1}^{2}$, $f_2(z_1,z_3)=z_3$, $g_1(z_2,z_4)=z_{2}^{2}$, $g_2(z_2,z_4)=z_4$, $r(z_1,z_3)=z_{1}^{4}-z_{3}$ and $h(z_2,z_4)= \bar{z}_2\bar{z}_{4}^{3}$. Then one has that $G=f_1\bar{g}_1+f_2\bar{g}_2 +r-h$. By Proposition  \ref{alg}, $G$ is a MSL.
\end{example}	

We point out that the Proposition \ref{alg} also extends the Thom-Sebastiani type result obtained in Corollary 4.2 of \cite{PT}. 

\begin{remark} If $f:\mathbb{C}^n\to \mathbb{C} $ is a mixed functions it follows by Theorem \ref{spt1} that only the condition $\im \left\langle \overline{\dd f}(z), \bar{\dd}f(z) \right\rangle_{\mathbb{C}}=0$ is enough to guarantee that the coefficient $a(z)$ is positive. In particular, if $\Disc f=\{0\}$ then there exists a MVF for  $f$ and $ \Psi_f $ is $\rho$-regular.
\end{remark}

\subsection{Product of mixed functions}
Let $F=fg,$ where $ f:\mathbb{C}^n \to \bC $ and $ g:\mathbb{C}^m \to \bC $ are mixed functions in separable variables. If we consider the identifications $ \nabla \|f\|^2  = 2(f\overline{\textrm{d} f}+\bar{f}\bar{\textrm{d}}f) $  and   $ \Omega_f = i(f\overline{\textrm{d} f}-\bar{f}\bar{\textrm{d}}f)$, where $ \Omega_f $ denote the direction $ u \nabla v - v \nabla u $, one can prove that:  

\vspace{0.2cm} 

\noindent $M(F)\m V_F \subset \left(M(f) \m V_f\right)\times \left(M(g) \m V_g \right)$, $M(F/\|F\|) \subset M(f/\|f\|) \times M(g/\|g\|)$.  In particular, if $M(f/\|f\|) =\emptyset$ or $ M(g/\|g\|)= \emptyset $, then $ M (F/\|F\|)  =\emptyset$. 

\vspace{0.2cm} 

In another words, for mixed functions in separable variables in order to get $F/\|F\|$ $\rho $-regular it is enough to ask the $\rho$-regularity for $f/\|f\|$ or $g/\|g\|$. Moreover,  

\vspace{0.2cm} 

\noindent $\Sing F=(\Sing f \times \Sing g) \cup (\left(V_f \cap \Sing f \right) \times \mathbb{C}^m) \cup (\mathbb{C}^n \times \left(V_g \cap \Sing g\right)) \cup (V_f \times V_g).$
Hence, if either $ \Disc  f = \{0\}$, or $ \Disc g= \{0\}$, then $\Disc F =\{0\}$. 

\begin{proposition}\label{eet5}
	Let $ f:\mathbb{C}^n \to \bC $ and $ g:\mathbb{C}^m \to \bC $ be mixed functions in separable variables. Suppose that  the mixed function $ F=f g:\mathbb{C}^n \times \mathbb{C}^m \to \bC $ satisfies the  condition \eqref{eq:main}. If either the conditions below holds true: 
	\begin{enumerate}
		\item [(i)] $\Disc f = \{0\}$ and on $B_{\epsilon_{1}}^{2n}\m V_f$ there exists a MVF for $f$, for some small enough $\epsilon_{1} >0$;
		
		\item[(ii)]$\Disc g = \{0\}$ and on $B_{\epsilon_{2}}^{2m}\m V_g$ there exists a MVF for $g$, for some small enough $\epsilon_{2} >0$,
	\end{enumerate}
	then both fibrations \eqref{ssf} and \eqref{sfibration1} exist and they are equivalent.
\end{proposition}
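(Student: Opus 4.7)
The plan is to reduce the proposition, under hypothesis (i), to three tasks: (1) verify that both fibrations exist, (2) construct a Milnor vector field for $F$ from the assumed MVF for $f$, and (3) conclude equivalence via Theorem \ref{tm}. Hypothesis (ii) will be handled symmetrically.

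For (1), since $F$ satisfies \eqref{eq:main}, Proposition \ref{t:tube} and Corollary \ref{tube} immediately yield the tube fibration \eqref{sfibration1}. To invoke Theorem \ref{sf2} for the sphere fibration, we need $\Disc F$ radial and $\Psi_F$ $\rho$-regular. The discussion preceding the statement records that $\Disc F = \{0\}$ under either hypothesis (hence trivially radial) and that $M(\Psi_F) \subseteq M(\Psi_f) \times M(\Psi_g)$; it therefore suffices to show $M(\Psi_f) = \emptyset$. By Theorem \ref{eqt2} the MVF hypothesis on $f$ gives $a(x) > 0$ on $M(f) \m V_f$, and by the characterization $a(x) = 0 \Leftrightarrow x \in M(\Psi_f) \m V_f$ recalled in Section 3, this forces $M(\Psi_f) \m V_f = \emptyset$. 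Since $M(\Psi_f) \subseteq M(f) \m V_f$ is already disjoint from $V_f$, we conclude $M(\Psi_f) = \emptyset$.

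For (2), on $B_\e^{2(n+m)} \m V_F$ with $\e < \e_1$, define $\nu_F(z,w) := (\nu_f(z),\, 0) \in \bC^n \times \bC^m$. Since $(z,w) \notin V_F$ forces $f(z) \neq 0$ and $g(w) \neq 0$, the first coordinate lies in $B_{\e_1}^{2n} \m V_f$ where $\nu_f$ is defined, so $\nu_F$ is well-defined and smooth. The key identity is $\Psi_F = \Psi_f \cdot \Psi_g$, valid precisely because $f$ and $g$ depend on disjoint variables; whence $d\Psi_F(\nu_F)(z,w) = \Psi_g(w)\, d\Psi_f(\nu_f)(z) = 0$, yielding $(c_1)$. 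For $(c_2)$, the splitting $\nabla \rho(z,w) = (\nabla \rho_n(z),\, \nabla \rho_m(w))$ gives $\langle \nu_F, \nabla \rho \rangle = \langle \nu_f, \nabla \rho_n \rangle > 0$. For $(c_3)$, the Leibniz rule yields $\nabla \|F\|^2 = (\|g\|^2 \nabla \|f\|^2,\, \|f\|^2 \nabla \|g\|^2)$, so $\langle \nu_F, \nabla \|F\|^2 \rangle = \|g(w)\|^2\, \langle \nu_f, \nabla \|f\|^2 \rangle > 0$ since both factors are strictly positive.

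Hence $\nu_F$ is a Milnor vector field for $F$, and Theorem \ref{tm} delivers the equivalence of the two fibrations. Under hypothesis (ii) the symmetric construction $\nu_F(z,w) := (0,\, \nu_g(w))$ works verbatim. The main conceptual step is the tangency condition $(c_1)$, which hinges entirely on the product factorization $\Psi_F = \Psi_f \cdot \Psi_g$ forced by the separation of variables; the remaining verifications are bookkeeping, exploiting that every gradient and every fiber splits cleanly along the two factors.
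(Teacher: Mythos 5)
Your proof is correct, but it takes a genuinely different route from the paper's at the central step. The paper never builds the vector field by hand: it writes the Milnor-set equation for $F$ at $(x,y)\in M(F)\m V_F$, uses the product identities $\nabla\|F\|^2=(\|g\|^2\nabla\|f\|^2,\|f\|^2\nabla\|g\|^2)$ and $\Omega_F=(\|g\|^2\Omega_f,\|f\|^2\Omega_g)$ to deduce $x\in M(f)\m V_f$, compares with the Milnor-set equation for $f$ and, using that $\{\nabla\|f\|^2,\Omega_f\}$ are linearly independent when $\Disc f=\{0\}$, obtains the relation $a_1(x)=a(x,y)\|g(y)\|^2$; positivity of $a_1$ (Theorem \ref{eqt2} applied to $f$) then gives $a(x,y)>0$, and Theorem \ref{eqt2} applied again to $F$ produces the MVF \eqref{eqe7}, with $\rho$-regularity of $\Psi_F$ coming for free from $a>0$. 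You instead construct the MVF explicitly as $\nu_F=(\nu_f,0)$ and check $(c_1)$--$(c_3)$ directly from the factorization $\Psi_F=\Psi_f\cdot\Psi_g$ and the splitting of the gradients, obtaining $\rho$-regularity of $\Psi_F$ from the inclusion $M(\Psi_F)\subset M(\Psi_f)\times M(\Psi_g)$ recorded before the statement rather than from the coefficient $a$. Both arguments rest on the same separable-variable identities; yours is more constructive and avoids the coefficient bookkeeping, while the paper's stays inside its scalar criterion (Theorem \ref{eqt2}) and yields the explicit relation $a_1=a\|g\|^2$, which quantifies how the Milnor data of $f$ controls that of $F$. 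Two small points you should make explicit: the identification of ``tangent to the fiber of $\Psi_F$'' with ``in the kernel of $d\Psi_F$'' in $(c_1)$ uses that $\Psi_F$ is a submersion off $V_F$, which is exactly what your step (1) ($M(\Psi_F)=\emptyset$) supplies, so the order of your steps matters; and the applications of Corollary \ref{tube}, Theorem \ref{sf2} and Theorem \ref{tm} require $F$ to be a nice germ, which follows from condition \eqref{eq:main} together with $\Disc F=\{0\}$ as recalled after Theorem \ref{sf2} (the paper's proof glosses this too).
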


\begin{proof} We will prove the item $(i)$. The item $(ii)$ follows in the same way.
	By hypothesis $ \Disc F = \{0\} $ thus, for any $ (x,y)\in M(F) \m V_F $ one has	 $ (x,y) \not \in \Sing F. $ Consequently, there exist $ a(x,y), b(x,y)\in \bR $ such that 
	
	\begin{equation*}
	(x,y)=a(x,y)\nabla \|F(x,y)\|^2 +b(x,y) \Omega_F (x,y).
	\end{equation*}
	
	\vspace{0.2cm}
	
	Since, $ \nabla \|F(x,y)\|^2 = \left(\|g(y)\|^2 \nabla\|f(x)\|^2 , \|f(x)\|^2 \nabla \|g(y)\|^2 \right)$ and \\ $ \Omega_F(x,y)=\left(\|g(y)\|^2\,\Omega_f (x),\|f(x)\|^2\, \Omega_g(y)  \right)$, one gets that
	
	\begin{equation}\label{eee5}
	\left\lbrace  \begin{tabular}{l l}
	$  x $&$ = a(x,y)\|g(y)\|^2\nabla\|f(x)\|^2+b(x,y)\|g(y)\|^2 \Omega_f(x) \medskip$ \\
	$  y $ &$ = a(x,y)\|f(x)\|^2\nabla\|g(y)\|^2 + b(x,y)\|f(x)\|^2 \Omega_g(y)  $
	\end{tabular} \right.
	\end{equation}
	
	\vspace{0.2cm}
	
	Hence, $ x\in M(f) \m V_f $ and $ y \in M(g) \m V_g $. 
	
	Suppose that there exists a MVF for $f$.   Let $ (x,y)\in \left(M(F)\m V_F\right) \cap B_{\e_1}^{2(n+m)} $. Since $ x \not \in \Sing f $ there exist $ a_1(x), b_1(x) \in \bR$ such that 
	\begin{equation}\label{eee6}
	x = a_1(x)\nabla \|f(x)\|^2 + b_1(x)\Omega_f(x).
	\end{equation}
	Comparing the first equation of (\ref{eee5}) and the equation (\ref{eee6}), one has that 
	\[\left(a(x,y)\|g(y)\|^2 - a_1(x)\right)\nabla\|f(x)\|^2+\left(b(x,y)\|g(y)\|^2-b_1(x)\right)\Omega_f(x)  =0\]
	Since $ \Disc f = \{0\}$, then $ \left\lbrace \nabla\|f(x)\|^2, \Omega_f(x) \right\rbrace  $  are linearly independent on $ \bR $. Hence, $ a_1(x)=a(x,y)\|g(y)\|^2 $ and $ b_1(x) = b(x,y)\|g(y)\|^2 $. 
	
	\vspace{0.2cm}
	
	By hypothesis, $ a_1(x)>0 $ which implies that $ a(x,y)>0. $ Then, by Theorem \ref{eqt2} the vector field \eqref{eqe7} is a MVF for F. Therefore, by Corollary \ref{tube} and Theorem \ref{sf2} one gets the existence of the Milnor-Hamm tube and sphere fibrations, and by Theorem \ref{tm} they are equivalent.
\end{proof}

\begin{corollary}\label{eec2}
	Let $ f:\mathbb{C}^n \to \bC $ be a holomorphic function and $ g:\mathbb{C}^m \to \bC $ be a mixed functions in separable variables. Suppose that  the mixed function $ F=f g:\mathbb{C}^n \times \mathbb{C}^m \to \bC $ satisfies the  condition \eqref{eq:main}. Then, there exist the fibrations \eqref{ssf} and \eqref{sfibration1} and they are equivalent.
\end{corollary}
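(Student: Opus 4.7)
The plan is to reduce the statement directly to Proposition \ref{eet5}(i). Since the condition \eqref{eq:main} for $F=fg$ is part of the hypothesis, only two things remain to be verified for $f$: that $\Disc f = \{0\}$, and that $f$ admits a Milnor vector field on $B^{2n}_{\epsilon_1}\setminus V_f$ for some small $\epsilon_1>0$. Once both are in hand, Proposition \ref{eet5}(i) delivers simultaneously the existence of the tube and sphere fibrations for $F$ and their equivalence.

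First, I would verify $\Disc f = \{0\}$. Since $f:(\mathbb{C}^n,0)\to(\mathbb{C},0)$ is a holomorphic germ, $\Sing f$ is a proper complex-analytic subset of the source, and $f(\Sing f)$ is therefore a subanalytic germ in $(\mathbb{C},0)$ of complex dimension at most zero. As a set germ in $(\mathbb{C},0)$ this forces $f(\Sing f)=\{0\}$, so $\Disc f=\{0\}$. This is the only place where holomorphy of $f$ (as opposed to merely being mixed Simple Ł) genuinely enters through the target dimension.

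Next, I would produce the MVF for $f$ as follows. The Cauchy--Riemann equations yield $\bar{\dd}f\equiv 0$, so the hermitian product $\langle \overline{\dd f},\bar{\dd}f\rangle_{\mathbb{C}}$ vanishes identically. By Corollary \ref{c1}, this shows that the holomorphic germ $f$, regarded as a mixed function, is a Simple Ł-Map. Combined with $\Disc f=\{0\}$ from the previous step, Theorem \ref{spt1}(ii) (applied to the real analytic map $f:\mathbb{R}^{2n}\to\mathbb{R}^{2}$) gives a Milnor vector field for $f$ on $B^{2n}_{\epsilon_1}\setminus V_f$ for every sufficiently small $\epsilon_1>0$, and shows that $\Psi_f$ is $\rho$-regular.

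With both hypotheses of Proposition \ref{eet5}(i) in place, that proposition yields at once the existence of \eqref{ssf} and \eqref{sfibration1} for $F$ together with their equivalence, which is exactly the claim. I do not expect any genuine obstacle here: the argument is a short chain \emph{holomorphy} $\Rightarrow$ \emph{mixed Simple Ł-Map} $+$ \emph{$\Disc f=\{0\}$} $\Rightarrow$ \emph{MVF for $f$} $\Rightarrow$ \emph{Proposition \ref{eet5}(i)}. The only mildly delicate point is the verification of $\Disc f=\{0\}$, which rests on the complex one-dimensionality of the target and is standard in this setting.
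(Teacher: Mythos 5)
Your proposal is correct and is essentially the paper's own argument: the published proof is precisely the reduction to condition (i) of Proposition \ref{eet5}, verified through Corollary \ref{c1} (holomorphy gives $\bar{\dd}f\equiv 0$, hence $f$ is a Simple \L-Map, hence the MVF and $\rho$-regularity for $f$). The only minor difference is your justification of $\Disc f=\{0\}$: you argue it directly, and the phrasing is a bit loose (subanalyticity of $f(\Sing f)$ alone does not rule out, say, a half-line germ; the clean classical statement is that $\Sing f\subset f^{-1}(0)$ as set germs, by Sard together with the open mapping theorem along a curve in the critical locus), whereas the paper obtains $\Disc f=\{0\}$ for free from the Simple \L-Map property via \cite[Theorem 5.7]{Ma}, as in the proof of Proposition \ref{c-slm}.
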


\begin{proof}
	
	It is enough to check that $f$ satisfies the condition ($ i $) of Proposition \ref{eet5}. In fact, it follows from Corollary \ref{c1}.
\end{proof}

\begin{example}
	Consider the mixed  function $ F:\mathbb{C}^2 \to \bC$, $ F(x,y)=y\|x\|^2 $. In \cite{ACT} the authors have shown that $ F $ satisfies the condition \eqref{eq:main} but does not have the Thom $ a_F $-condition. Let us consider the mixed functions $ f:\bC \to \bC $ and $ g:\bC \to \bC $ given by $ f(y)=y $ and $ g(x)=x\,\bar{x} $, then $ F=fg $. By Corollary \ref{eec2}, there exist the fibrations \eqref{ssf} and \eqref{sfibration1} and they are equivalent.
\end{example}

\end{document}